\newtheorem{thm}{Theorem}[section]
\newtheorem{cor}[thm]{Corollary}
\newtheorem{lem}[thm]{Lemma}
\theoremstyle{definition}
\newtheorem{defn}[thm]{Definition}
\theoremstyle{remark}
\newtheorem{rem}[thm]{Remark}
\newtheorem{claim}[thm]{Claim}
\numberwithin{equation}{section}
\newcommand{\Real}{\mathbb R}
\title[Regularity results for biharmonic maps]{Quantitative stratification and higher regularity for biharmonic maps}
\author[C.~Breiner]{Christine~Breiner}
\address[C.~Breiner]{Department of Mathematics, Fordham University, Bronx, NY 10458}
\email{cbreiner@fordham.edu}
\author[T.~Lamm]{Tobias~Lamm}
\address[T.~Lamm]{Institute for Analysis, Karlsruhe Institute of Technology (KIT), Kaiserstr. 89-93, D-76133 Karlsruhe, Germany}
\email{tobias.lamm@kit.edu}
\thanks{The first author was supported in part by NSF grant DMS-1308420.}
\keywords{Harmonic maps, biharmonic maps, regularity theory, singular set}
\begin{document}

\maketitle
\begin{abstract}
In this paper we prove quantitative regularity results for stationary and minimizing extrinsic biharmonic maps. As an application, we determine sharp, dimension independent $L^p$ bounds for $\nabla^k f$ that do not require a small energy hypothesis. In particular, every minimizing biharmonic map is in $W^{4,p}$ for all $1\le p<5/4$. Further, for minimizing biharmonic maps from $\Omega \subset \Real^5$, we determine a uniform bound on the number of singular points in a compact set. Finally, using dimension reduction arguments, we extend these results to minimizing and stationary biharmonic maps into special targets.
\end{abstract}
\section{Introduction}
In this paper we refine the regularity theory for minimizing and stationary biharmonic maps $f\in W^{2,2}(\Omega,N^n)$ where $\Omega \subset \Real^m$, $m \geq 4$, $N$ is a compact, smooth manifold and $N\subset \Real^\ell$ has no boundary. Known regularity results for harmonic maps have biharmonic analogues and many of the proofs in the biharmonic setting are inspired by the proofs for harmonic maps. 
Our regularity results rely on the technique known as \emph{quantitative stratification}, first developed by Cheeger and Naber in \cite{CNInvent} and later used to refine the regularity theory for harmonic maps and minimal currents \cite{CNCPAM}.

The first regularity results for biharmonic maps are due to Chang, L. Wang, and Yang \cite{ChangWangYang} who consider $N = \mathbb S^{\ell-1}$. In concert with harmonic results \cite{Helein,Evans2,ChangWangYang2}, they determine that all biharmonic maps into spheres are smooth for $m=4$ and for stationary biharmonic $f:\Omega^m \to \mathbb S^{\ell-1}$ with $m\geq 5$, $\dim\mathcal S(f) \leq m-4$. Here we define $\mathcal S(f):=\Omega\backslash \{ x \in \Omega: f\text{ is } C^\infty \text{ in a neighborhood of }x\}$ and $\dim X$ represents the Hausdorff dimension of a set $X$. Somewhat later, Strzelecki and C. Wang separately provided alternate proofs \cite{Wang,Strzelecki}. Following the ideas of H\'elein \cite{Helein}, each of the proofs first exploit the symmetry of the target manifold. Chang, L. Wang, and Yang write the non-linear part of the Euler-Lagrange equation in divergence form while Strzelecki and C. Wang each observe that the full Euler-Lagrange equation is equivalent to a particular wedge product being divergence free. The biharmonic proofs then deviate from the classical Hardy space methods of \cite{Helein} as the structure of the higher order problem makes it difficult to verify that the appropriate lower order terms are in $\mathcal H^1$. Instead, \cite{ChangWangYang} uses singular integral estimates to determine an appropriate decay estimate and H\"older continuity, \cite{Strzelecki} replaces the Hardy methods with a weaker duality lemma that still allows one to conclude a reverse H\"older inequality, and \cite{Wang} uses a Coulomb gauge frame and estimates on Riesz potentials in Morrey spaces. 

C. Wang extends his result on biharmonic maps to general targets in \cite{WangMathZ,Wang2}. Again using a Coulomb gauge frame and Riesz potentials or Lorentz space estimates, he establishes the necessary decay estimates. He shows that every biharmonic $f:B^4 \to N$ is smooth and every stationary biharmonic $f:B^m \to N$ with $m \geq 5$ satisfies $\dim\mathcal S(f) \leq m-4$. The second author and Rivi\`ere \cite{LammRiv} and  Struwe \cite{Struwe} later provided an alternate proof of the (partial) regularity, extending the lower order gauge theory technique developed in \cite{Riv} and \cite{RiviereStruwe}. See \cite{Helein2,Bethuel} for analogous harmonic results. Finally, Scheven \cite{Scheven} uses the analysis of defect measures to improve the codimension bound on the singular set for minimizers to $m-5$ and lowers the dimension further for special targets. Scheven's proof follows ideas developed by Lin \cite{Lin} for minimizing and stationary harmonic maps. See also \cite{SchoenUhlenbeck,Luckhaus} for similar results on harmonic maps. 

Let $f \in W^{2,2}(\Omega,N)$ be a stationary biharmonic map and let $\mathcal S^k(f)$ denote the set of points in $\Omega$ where $y \in \mathcal S^k(f)$ if no tangent map of $f$ at $y$ is $k+1$-homogeneous. (See Definition \ref{homdef} and following for the precise definitions.) Then we have the natural stratification
\[
\mathcal S^0(f) \subset \mathcal S^1(f) \subset \cdots \subset \mathcal S^{m-1}(f) = \mathcal S(f) \subset \Omega.
\]Moreover, by \cite{White_Strat}
\begin{equation}\label{SingDim}
\dim\mathcal S^k(f) \leq k 
\end{equation}
and by \cite{Scheven}, if $f$ is minimizing biharmonic then
\begin{equation}\label{SingCod}
 \mathcal S^{m-5}(f)= \mathcal S(f).
\end{equation}




Our first result is a quantitative refinement of \eqref{SingDim} for stationary biharmonic maps. In Theorem \ref{minkowskibound}, we demonstrate an effective refinement of the Hausdorff dimension bound on \emph{quantitative singular strata} $\mathcal S^k_{\eta,r}(f)$, see Definition \ref{qssdef}. This result also improves the Hausdorff dimension bound on the singular set, described in \cite{Scheven}, to a Minkowski bound. Our second main result gives the higher regularity for minimizers. In Theorem \ref{epsreg}, we demonstrate that a sufficiently high degree of almost homogeneity for minimizers implies normalized $C^4$ bounds on a fixed scale.  
We then use the volume bound of Theorem \ref{minkowskibound} to get precise control on the size of the set of points where this almost homogeneity fails on fixed scales. Controlling the size of the bad set allows us to determine $L^p$ bounds for the reciprocal of the regularity scale function, see Theorem \ref{MinMinkowski}. These $L^p$ bounds immediately imply dimension independent, sharp $L^p$ estimates on $\nabla f, \nabla^2 f, \nabla^3 f, \nabla^4 f$. These regularity results are the first of their kind and are stated in Corollary \ref{MinCor}. Finally in Theorem \ref{counting}, for minimizing and stationary biharmonic maps where domain dimension and target hypotheses imply that $\dim \mathcal S(f)=0$, we use the techniques developed in \cite{NVV} to determine a uniform upper bound on the number of singular points in a compact set.

We remark that the results of this paper also hold for the intrinsically defined second-order functional considered first by Moser in \cite{Moser} and later by Scheven in \cite{Scheven2}. Indeed, all of the techniques of this paper immediately extend to the intrinsic setting and the necessary compactness theory and dimension bounds for minimizers to the intrinsic energy are proven in \cite{Scheven2}.

\section{Background and Definitions}
Let $f \in W^{2,2}(\Omega^m,N^n)$ with $\Omega \subset \Real^m$ and $N$ a smooth and compact manifold with $N \subset \Real^\ell$. Define the biharmonic energy functional \begin{equation}\label{bieq}
E(u):= \int_\Omega |\Delta f|^2 \, dx.
\end{equation}
The map $f$ is called \emph{weakly biharmonic} if
\begin{equation}\label{Ediff}
\frac \partial{\partial t}\bigg|_{t=0}E(f_t)=0
\end{equation}for all $X \in C^\infty_0(\Omega,\Real^\ell)$ where $f_t:= \pi_N(f+tX)$ and $\pi_N:U \to N$ is the nearest point retraction of a neighborhood $N \subset U$ onto $N$. The map $f$ is \emph{stationary biharmonic} if \eqref{Ediff} also holds for all $f_t(x):=f(x+t\xi(x))$ where $\xi \in C_0^\infty(\Omega,\Real^m)$. We define $f$ as \emph{minimizing biharmonic} if
\[
E(f) \leq E(v)
\]for all $v\in W^{2,2}(\Omega,N)$ such that $v=f$ on $\Omega \backslash K$ for some compact $K \subset \Omega$.

We will be interested in biharmonic maps with uniform energy bounds and for convenience define the following space.\begin{defn}For $\Lambda>0$ define the space $H^2_{\Lambda}(B_R(x),N)$ such that if $f:B_R(x)\subset \Real^m \to N^n$ and $f \in H^2_{\Lambda}(B_R(x),N)$ then
\begin{equation}\label{UEB1}
\int_{B_R(x)} |\nabla f|^2 + |\nabla^2 f|^2<\Lambda,
\end{equation}
\begin{equation}\label{UEB}
R^4\int_{B_R(x)} |\Delta f|^2dy + R^3\int_{\partial B_R(x)} \left(\partial_X|\nabla f|^2 + 4|\nabla f|^2- 4 \frac{|\partial_X f|^2}{|y-x|^2}\right) d\mathcal H^{m-1} \leq R^m\Lambda.
\end{equation}Here $\partial_X f = (y_i-x_i) \partial_i f$.
\end{defn}
Notice that we could replace the bound \eqref{UEB1} by an equivalent bound on the energy $E(f)$.

The proof of the Minkowski dimension estimate relies on a rigidity lemma, Lemma \ref{QRlemma}, which demonstrates that small changes in a monotonic quantity implies almost-homogeneity on a fixed scale.
Many authors have derived a monotonicity formula for stationary biharmonic maps \cite{Angelsberg,ChangWangYang,Scheven,Wang2}.

Let $f:B_R(0) \subset \Real^m \to N$ be a stationary biharmonic map with $f \in H^2_\Lambda(B_R(0),N)$. For any $x \in B_{R/4}(0)$ and a.e. $0<r<R/4$, the expression 
\begin{align}\label{ThetaDef}
\Theta_f(x,r):= &r^{4-m} \int_{B_r(x)} |\Delta f|^2dy \\&  \notag+ r^{3-m} \int_{\partial B_r(x)}\left( \partial_X|\nabla f|^2 + 4|\nabla f|^2 -4\frac{|\partial_X f|^2}{|y-x|^2} \right)d\mathcal H^{m-1}
\end{align} is well-defined and bounded above by $\Lambda$. 
Moreover, for all $x\in B_{R/4}(0)$ and a.e. $0<\rho<r<R/4$,
\begin{equation}\label{AnnularMonotone}
\Theta_f(x,r) - \Theta_f(x,\rho) = 4 \int_{B_r(x)\backslash B_\rho(x)} \left(\frac{|\nabla \partial_X f|^2}{|y-x|^{m-2}} + (m-2) \frac{|\partial_X f|^2}{|y-x|^{m}}\right) dy.
\end{equation}
The rigidity lemma demonstrates that when the quantity \eqref{AnnularMonotone} is sufficiently small on a fixed scale, a fixed dilation of the map $f$ is almost-homogeneous. We define higher homogeneity to record the largest linear subspace on which the map $f$ is constant.
\begin{defn}\label{homdef}
A measurable map $f:\Real^m \to N$ is \emph{$k$-homogeneous at $y$} with respect to the $k$-plane $V^k \subset \Real^m$ if
\begin{enumerate}
\item $f(y+ \lambda x) = f(y+x)$ for all $\lambda>0$ and $x \in \Real^m$,
\item $f(x+z) = f(x)$ for every $x \in \Real^m$ and $z \in V^k$.
\end{enumerate}If $y = 0$ we simply say that $f$ is \emph{$k$-homogeneous}.
\end{defn}\noindent Notice that a $0$-homogeneous map is radially constant and an $m$-homogeneous map is constant.

We will be interested in the homogeneity of quantitative blow-ups of the map $f$ at a fixed point. Presume that $f:B_4(0) \to N$. 
For $y \in B_2(0)\subset\Real^m$ and $0<r<2$, let $T_{y,r}f:B_{r^{-1}}(0) \to N$ such that
\[
T_{y,r}f(x) := f(r(x-y)).
\]A map $T_yf:\Real^m \to N$ is a \emph{tangent map} of $f$ at $y$ if there exists a sequence $r_i \to 0$ such that
\[
\fint_{B_1(0)}
|T_yf-T_{y,r_i}f|^2 \to 0.
\]Note that while a tangent map may not be unique, the monotonicity formula implies that tangent maps to stationary $f$ are always  $0$-homogeneous. 

Recall that $x \in \mathcal S^k(f)$ if and only if no tangent map $T_xf$ at $x$ is $(k+1)$-homogeneous.
As we are interested in quantitative results, we will not consider the homogeneity of tangent maps of $f$ but instead will quantify almost homogeneity of almost tangent maps of $f$. 
\begin{defn}
A measurable map $f:B_{2r}(x) \subset \Real^m \to N$ is \emph{$(\varepsilon,r,k)$-homogeneous at $x$} if there exists a $k$ homogeneous map $h:\Real^m \to N$ such that
\[
\fint_{B_1(0)}
|T_{x,r}f-h|^2 \leq \varepsilon.
\]
\end{defn}If $h$ is $k$-homogeneous with respect to $V^k \subset \Real^m$ we denote
\begin{equation}\label{fVplane}
V^k_{f,x} := B_r(x) \cap (V^k+x).
\end{equation}Notice that if $f$ is $(0,r,k)$-homogeneous at $x$ then $f|_{V^k_{f,x}}$ is a.e. constant.
\begin{defn}\label{qssdef}Let $f :B_4(0)\to N$ be a measurable map. 
For each $\eta>0$ and $0<r<1$ the \emph{$k^{th}$ quantitative singular stratum $\mathcal S^k_{\eta,r}(f) \subset \Real^m$} is the set
\begin{align}
\mathcal S^k_{\eta,r}(f):= \bigg\{ x \in B_2(0): &\fint_{B_1(0)}
|T_{x,s}f-h|^2>\eta \text{ for all } r \leq s \leq 1 \\&\notag \text{ and } (k+1)-\text{homogeneous maps } h \bigg\}.
\end{align}
\end{defn}By definition $x \in S^k_{\eta,r}(f)$ if and only if $f$ is not $(\eta,s,k+1)$-homogeneous for all $r \leq s \leq 1$. Further, we have the following relations:
\begin{equation*}
\mathcal S^{k}_{\eta,r}(f) \subset \mathcal S^{k'}_{\eta',r'}(f) \,\,\text{ if } k\leq k', \,\eta' \leq \eta,\, r \leq r',
\end{equation*}
\begin{equation}\label{SingInt}
\mathcal S^k(f) = \bigcup_\eta \bigcap_r \mathcal S^k_{\eta,r}(f).
\end{equation}

\section{Stratification Estimates}
The method of quantitative stratification has been applied in a number of settings. In addition to the previously mentioned results, quantitative stratification has been used to improve regularity results for parabolic problems \cite{CHNMCF,CHNHF}, for a more general class of elliptic problems \cite{CNV}, and recently for $p$-harmonic maps \cite{NVV}. In a recent preprint \cite{FMS}, Focardi, Marchese, and Spadaro outline an abstract approach to quantitative stratification that can be applied under the presumption of a structural hypothesis. The hypotheses are satisfied by the existence of a monotonicity formula and a cone-splitting principle. We present the quantitiative stratification argument for biharmonic maps in its entirety for clarity of exposition and to demonstrate that the argument is easily modified if the monotonicity formula only holds almost everywhere. 

Our first result gives a volume bound on the tubular neighborhood of the quantitative singular strata. 

\begin{thm}\label{minkowskibound}
Let $f:B_4(x)\subset \Real^m \to N^n$ denote a stationary biharmonic map with $f\in H^2_\Lambda(B_4(x),N)$. Then for all $\eta >0$ there exists $C=C(m, N, \Lambda, \eta)$ such that for any $0<r<1$,
\begin{equation}\label{TrSS}
\mathrm{Vol}(T_r(\mathcal S^k_{\eta,r}(f))\cap B_1(x)) \leq Cr^{m-k-\eta}
\end{equation}where $T_r(A)$ denotes the $r$ tubular neighborhood of a set $A$. 
\end{thm}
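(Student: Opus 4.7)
The plan is to adapt the Cheeger--Naber quantitative stratification scheme to the biharmonic setting. The three essential ingredients are: (i) the monotonicity identity \eqref{AnnularMonotone}, which caps the total increment of $\Theta_f(x,\cdot)$ on $(0,1]$ by $\Lambda$; (ii) the quantitative rigidity lemma \ref{QRlemma}, which converts a small monotonicity increment across a dyadic scale into approximate $0$-homogeneity of $T_{x,s}f$; and (iii) a cone-splitting principle, which upgrades approximate $0$-homogeneity at several effectively independent points to approximate higher homogeneity at a single point.

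I would first prove the cone-splitting statement: if $f$ is $(\eps,s,0)$-homogeneous at each of $x_0,\dots,x_j\in B_s(x_0)$, and these $j+1$ points $\tau$-effectively span a $j$-dimensional affine subspace, then $f$ is $(\eps',s,j)$-homogeneous at $x_0$, with $\eps'\to 0$ as $\eps\to 0$ for fixed $\tau$. The contrapositive drives the dimension drop: if $x_0\in\mathcal S^k_{\eta,r}(f)$, then on any scale $s\in[r,1]$, the points in $\mathcal S^k_{\eta,r}(f)\cap B_s(x_0)$ at which $f$ is simultaneously almost $0$-homogeneous must lie in a thin tubular neighborhood of some $k$-plane through $x_0$. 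Separately, for every $x\in\mathcal S^k_{\eta,r}(f)$, the monotonicity budget combined with pigeonhole forces all but at most $N_0=N_0(\Lambda,\delta)$ of the $\log_2(1/r)$ dyadic scales in $[r,1]$ to produce a monotonicity increment below $\delta$, and hence approximate $0$-homogeneity of $T_{x,s}f$ by the rigidity lemma.

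With these tools in hand, the main work is an iterative covering on dyadic scales. Starting from $B_1(x)$, at each refinement from scale $s$ to $s/2$: if $f$ is approximately $0$-homogeneous at the current center, cone-splitting confines $\mathcal S^k_{\eta,r}(f)$ to a thin slab of a $k$-plane, coverable by at most $C(m)\,2^k$ balls of radius $s/2$; on the rare bad scales a trivial refinement by $C(m)\,2^m$ sub-balls is used instead. Aggregating along the refinement tree of depth $\log_2(1/r)$, the good contributions multiply to $2^{k\log_2(1/r)}=r^{-k}$, while the bad scales, bounded in number by $N_0(\Lambda,\delta)$, contribute an additional factor bounded by $r^{-\eta}$ once $\delta=\delta(\eta,\Lambda,m)$ is chosen so that $N_0\cdot \log_2 C(m)\le \eta\log_2(1/r)$. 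Multiplying the resulting ball count by $r^m$ yields \eqref{TrSS}.

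The main obstacle is the simultaneous calibration of the constants $\delta$, $\eps$ (rigidity input), $\eps'$ (cone-splitting output), and $\tau$ (effective-span threshold) in a strictly forward-dependent order, so that at every node of the refinement tree the almost-$k$-homogeneity assembled by cone-splitting remains below the $\eta$-threshold defining $\mathcal S^k_{\eta,r}(f)$; otherwise the slack compounds through depth $\log_2(1/r)$ and destroys the polynomial bound. A secondary technicality is that \eqref{AnnularMonotone} only holds for a.e.\ radius, so admissible dyadic scales must be selected with care, but this is harmless since the rigidity lemma and cone-splitting estimates both depend continuously on scale.
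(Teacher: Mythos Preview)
Your identification of the three ingredients---monotonicity, quantitative rigidity, and cone-splitting---is correct, and the overall architecture is right. However, the covering argument as you describe it has a genuine gap and a mis-accounting of where the $r^{-\eta}$ loss originates.

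\textbf{The main gap.} You write that ``if $f$ is approximately $0$-homogeneous at the current center, cone-splitting confines $\mathcal S^k_{\eta,r}(f)$ to a thin slab of a $k$-plane.'' This is not what cone-splitting gives. The Cone-Splitting Lemma only confines to a $k$-plane slab those points $y\in B_s(x)$ at which $f$ is \emph{itself} $(\eps,2s,0)$-homogeneous. Points of $\mathcal S^k_{\eta,r}(f)\cap B_s(x)$ for which $s$ happens to be a \emph{bad} scale are not controlled at all by this step. Since the set of bad scales is a per-point object, a tree-based refinement keyed to the center's good/bad status cannot cover the whole stratum. The paper resolves this by first decomposing $B_1$ into the sets $E_{T^\beta}(f)$ indexed by the finite good/bad pattern $(T_1,\dots,T_\beta)$: within a single $E_{T^\beta}(f)$, all points share the same good scales, so at a common good scale every point in the current ball is approximately $0$-homogeneous and cone-splitting applies uniformly. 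The combinatorial cost is at most $\beta^{Q}$ nonempty patterns, with $Q=Q(\Lambda,\delta,q)$ fixed; this subexponential factor, not the bad-scale count itself, is one of the two sources of the $\eta$-loss.

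\textbf{The accounting error.} With a fixed dyadic ratio, each good-scale refinement costs $c_0\,2^{k}$ balls, not $2^{k}$; over $\beta=\log_2(1/r)$ scales the accumulated constant $c_0^{\beta}$ is $r^{-\log_2 c_0}$, a \emph{fixed} power loss that cannot be made smaller than $\eta$. The paper absorbs this by working at scale ratio $\gamma\le c_0^{-2/\eta}$ (chosen after $\eta$), so that $c_0^{\beta}\le(\gamma^{\beta})^{-\eta/2}$. By contrast, the bad-scale contribution $(c_0\gamma^{-m})^{Q}$ is a constant depending only on $(m,N,\Lambda,\eta)$ and is simply absorbed into $C$; your proposed constraint $N_0\cdot\log_2 C(m)\le\eta\log_2(1/r)$ is neither needed nor achievable with $\delta$ independent of $r$. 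In short: the $r^{-\eta}$ loss comes from (i) the per-scale covering constant $c_0$, controlled by choosing $\gamma$ small, and (ii) the pattern count $\beta^{Q}$, controlled because polynomials are dominated by exponentials---not from the bad scales themselves.
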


\begin{rem}
Since all of our results are local, the arguments of this paper extend to maps from a smooth, closed manifold with uniform bounds on curvature and injectivity radius. The arguments then follow the same outline, modulo lower order terms that arise from the geometry of the domain.
\end{rem}

Recall that the Minkowski dimension of a set $A$ is determined by the \emph{$r$-content} of the set. For all $r \leq 1$ and all $\eta >0$, if the number of closed metric balls of radius $r$ in a minimal covering of $A$ is bounded by $C_\eta r^{-(d+\eta)}$, then $\dim_{\mathrm{Min}}A \leq d$. (Throughout the paper we let $\dim_{\mathrm{Min}}A$ denote the Minkowski dimension of the set $A$.) Equivalently, we can define
\[
\dim_{\mathrm{Min}}A := \inf\left\{s: \limsup_{\delta \to 0}\frac{\mathrm{Vol}(T_\delta(A))}{\delta^{m-s}}<\infty\right\}.
\]
As an immediate consequence of \eqref{SingInt} and \eqref{TrSS}, $\dim_{\mathrm{Min}}(\mathcal S^k) \leq k$. Moreover, since $\dim A \leq \dim_{\mathrm{Min}}A$, we recover the Hausdorff bound on the singular strata \eqref{SingDim}.

The proof  of Theorem \ref{minkowskibound} follows the same general strategy of previous quantitative stratification papers \cite{CNCPAM,CNInvent,CHNMCF,CHNHF,CNV}. We begin by proving a quantitative version of the fact that tangent cones to stationary biharmonic maps are $0$-homogeneous. We then use this lemma to decompose $B_1(x)$ into a union of sets where the sets encode the behavior of points on various scales. The uniform energy bound for $f$ implies that the number of sets grows at most polynomially in the number of scales. Using a quantitative cone-splitting lemma, we demonstrate that points in $\mathcal S^j_{\eta,r}(f)$ with ``good'' behavior on the same scale roughly line up in a neighborhood of some subspace $\Real^j \subset \Real^m$. We then find a suitable covering on which we compute the volume of $\mathcal S^j_{\eta,r}(f) \cap B_1(0)$.

We first recall the cone-splitting principle for measurable maps: If $h:\Real^m \to N^n$ is $k$-homogeneous at $y$ with respect to the $k$-plane $V^k$, $h$ is $0$-homogeneous at $z \in \Real^m$, and $z \notin y+V$, then $h$ is $(k+1)$-homogeneous at $y$ with respect to the $(k+1)$-plane $\mathrm{span}\{z-y, V^k\}$.
We now state a quantitative version of cone splitting that we will use. 
 \begin{lem}[Cone-Splitting Lemma] \label{conesplitting}Given $\eta, \tau>0$ there exists $\varepsilon=\varepsilon(m,N,\Lambda, \eta,\tau)>0$ such that the following holds: Let $f:B_{4}(0) \to N$ with $\|f\|_{W^{2,2}(B_{4}(0))}\leq \Lambda$. If, in addition, for $x \in B_1(0)$ and $0<r<1$,
\begin{enumerate}
\item $x \in \mathcal S^k_{\eta,r}(f)$ and
\item $f$ is $(\varepsilon,2r,0)$-homogeneous at $x$
\end{enumerate}then there exists some $k$-plane $V$ such that
\[
\{y \in B_r(x) : f \text{ is }(\varepsilon,2r,0)-\text{homogeneous at }y\} \subset T_{\tau r}(V).
\]
\end{lem}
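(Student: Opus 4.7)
The plan is a standard contradiction-and-compactness argument driven by the qualitative cone-splitting principle recited just above. First I suppose the conclusion fails, producing sequences $\varepsilon_i\to 0$, $f_i\in W^{2,2}(B_4,N)$ with $\|f_i\|_{W^{2,2}}\le\Lambda$, $x_i\in B_1(0)$, $r_i\in(0,1)$ satisfying (1) and (2) with $\varepsilon_i$, such that
\[
A_i:=\{y\in B_{r_i}(x_i):f_i\text{ is }(\varepsilon_i,2r_i,0)\text{-homogeneous at }y\}
\]
is not contained in $T_{\tau r_i}(V)$ for any $k$-plane $V$. From this failure I inductively extract $k+2$ well-separated points in $A_i$: starting with $y_i^0:=x_i$, at each step the affine span of $y_i^0,\dots,y_i^j$ has dimension at most $k$, so enlarging that span to any $k$-plane $V$ and invoking $A_i\not\subset T_{\tau r_i}(V)$ furnishes $y_i^{j+1}\in A_i$ at distance greater than $\tau r_i$ from that span.

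\smallskip

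Next I rescale to unit scale by setting $g_i(z):=f_i(x_i+2r_iz)$ (defined on at least $B_2(0)$) and $z_i^j:=(y_i^j-x_i)/(2r_i)\in\bar B_{1/2}(0)$, so $z_i^0=0$ and the $\tau/2$-affine-independence of the $z_i^j$ is preserved. The $(\varepsilon_i,2r_i,0)$-homogeneity of $f_i$ at $y_i^j$ becomes
\[
\fint_{B_1(0)}|g_i(z_i^j+z)-h_i^j(z)|^2\,dz\le\varepsilon_i
\]
for some $0$-homogeneous $h_i^j:\mathbb R^m\to N$. After extracting subsequences, $z_i^j\to z_\infty^j$ and the limit points $\{z_\infty^j\}_{j=0}^{k+1}$ span a $(k+1)$-plane $V^{k+1}$ through the origin. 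I combine the pointwise bound from the compactness of $N$ with uniform control on $\int|\Delta g_i|^2$---scale-invariant when $m=4$, and in higher dimensions supplied by the monotonicity formula \eqref{AnnularMonotone} (since the lemma will be applied in the regime $f\in H^2_\Lambda$)---to obtain, via Rellich, strong $L^2_{\mathrm{loc}}$ convergence $g_i\to g_\infty$ and weak convergence $h_i^j\rightharpoonup h_\infty^j$ with each $h_\infty^j$ still $0$-homogeneous. Passing to the limit in the displayed inequality yields $g_\infty(z_\infty^j+z)=h_\infty^j(z)$ a.e.\ on $B_1(0)$, so $g_\infty$ is $0$-homogeneous around each $z_\infty^j$.

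\smallskip

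Applying the qualitative cone-splitting principle iteratively to these $k+2$ points, and using the global $0$-homogeneity of the radial extension to propagate the new translation invariances from $B_{1/2}(0)$ to all of $\mathbb R^m$, $g_\infty$ is $(k+1)$-homogeneous with respect to $V^{k+1}$. Strong $L^2$ convergence then gives $\fint_{B_1(0)}|g_i-g_\infty|^2<\eta$ for $i$ large, showing $f_i$ is $(\eta,2r_i,k+1)$-homogeneous at $x_i$. Once $r_i\le 1/2$ one has $2r_i\in[r_i,1]$ and this directly contradicts $x_i\in\mathcal S^k_{\eta,r_i}(f_i)$; the range $r_i>1/2$ is handled by a trivial scaling reduction (shrinking the working scale so that the conclusion is nontrivial). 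The hard part will be securing the strong $L^2$ convergence of $g_i$ in dimensions $m>4$, where $\|f_i\|_{W^{2,2}}$ does not rescale usefully; this is where the monotonicity formula does essential work, converting the uniform energy hypothesis into scale-invariant control on $\int|\Delta g_i|^2$ that feeds into Rellich.
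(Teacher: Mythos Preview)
Your proposal is correct and follows essentially the same strategy as the paper: argue by contradiction, extract $k+2$ affinely independent points at which the almost $0$-homogeneity holds, pass to a limit, and apply the qualitative cone-splitting principle to conclude the limit is $(k+1)$-homogeneous, contradicting membership in $\mathcal S^k_{\eta,r}$.

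The one substantive difference is how the reduction to unit scale is handled. The paper simply declares ``without loss of generality $x=0$ and $r=1$'' and then uses the ambient bound $\|f_i\|_{W^{2,2}(B_4)}\le\Lambda$ to extract a strongly $W^{1,2}$-convergent subsequence via Rellich. You instead rescale explicitly to $g_i(z)=f_i(x_i+2r_iz)$ and correctly observe that in dimensions $m>4$ the $W^{2,2}$ bound does not survive this rescaling, so you appeal to the monotonicity formula (available for the stationary maps to which the lemma is actually applied) to recover scale-invariant control on $\int|\Delta g_i|^2$ and hence $L^2$-compactness. Your caution here is justified: the paper's ``WLOG $r=1$'' tacitly relies on the same mechanism, since without scale-invariant bounds a contradicting sequence with $r_i\to 0$ would not yield compactness of the blow-ups. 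In effect you have made explicit a step the paper leaves implicit; both arguments are valid in the stationary regime where the lemma is used.
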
Recall that $x \in \mathcal S^k_{\eta,r}(f)$ iff $f$ is not $(\eta,s, k+1)$-homogeneous at $x$ for all $r \leq s \leq 1$.
\begin{proof}
Arguing by contradiction we assume without loss of generality that $x=0$ and $r=1$. Thus, given $\varepsilon, \tau >0$ there exists a sequence $\{f_i\}$ with $\|f_i\|_{W^{2,2}(B_{4}(0))}\leq \Lambda$ such that $0 \in \mathcal S^k_{\eta,1}(f_i)$ for all $i$, $f_i$ is $(1/i,2,0)$-homogeneous at $0$ and there exist points $\{x_1^i, x_2^i,\dots, x_{k+1}^i\}\subset B_1(0)$ satisfying the following two conditions:
\begin{itemize}
\item $f_i$ is $(1/i,2,0)$-homogeneous at each $x_j^i$ for $j=1, \dots, k+1$,
\item for all $j=1, \dots, k+1$, $\mathrm{dist}(x_j^i,\mathrm{span}\{0,x_1^i, \dots, x_{j-1}^i\}) \geq \tau$.
\end{itemize} After passing to a subsequence, there exists a map $f$ such that $f_i \to f$ strongly in $W^{1,2}(B_4(0))$ and weakly in $W^{2,2}$. After passing to a further subsequence, there exist points $\{x_1, \dots, x_{k+1}\} \subset \overline {B_1(0)}$ such that $f$ is $(0,2,0)$-homogeneous at $0$ and at each $x_j$, $j=1, \dots, k+1$. Moreover the distance relations are preserved so for all $j=0, \dots, k+1$, and $x_0=0$, $\mathrm{dist}(x_j,\mathrm{span}\{x_0,x_1, \dots, x_{j-1}\}) \geq \tau$.

Notice that $f=\phi_j$ a.e. $B_2(x_j)$ for some $0$-homogeneous $\phi_j$. Moreover $\phi_j=\phi_m$ on $B_2(x_j)\cap B_2(x_m)$. Taken together, these statements imply that for every $j=1,\ldots,k+1$ 
$$\phi_0(0)= \phi_j(0)= \phi_j(x_j/2)=\phi_0(x_j/2).$$
Thus $\phi_0$ is constant in $\mathrm{span}\{0, x_1, \dots, x_{k+1}\}$, a $k+1$-dimensional subspace of $\Real^m$. Since $f|_{B_1(0)}=\phi_0$ a.e., it follows that $f$ is $(0,1,k+1)$-homogeneous at $0$. The strong convergence of $f_i \to f$ in $W^{1,2}$ is enough to give a contradiction.

\end{proof}

Next we demonstrate that if a stationary $f$ has sufficiently small monotonic difference on the boundary of an annulus, then $f$ is almost radially constant. 
\begin{lem}[Quantitative Rigidity]\label{QRlemma}
Let $f:B_4(0) \subset \Real^m\to N^n$ be a stationary biharmonic map with $f \in H^2_\Lambda(B_4,N)$. Then for every $\varepsilon >0$ and $0<\gamma<1/2$ there exist $\delta = \delta(\gamma,\varepsilon, m, N, \Lambda)>0$ and $q=q(\gamma,\varepsilon,m,N,\Lambda)\in \mathbb N$ such that for $r \in (0,1/2)$, if $\Theta_f(0,2r) - \Theta_f(0,\gamma^q r)$ is well-defined and
\begin{equation}
\Theta_f(0,2r) - \Theta_f(0,\gamma^q r) \leq \delta,
\end{equation}then $f$ is $(\varepsilon,2r,0)$-homogeneous at $0$.
\end{lem}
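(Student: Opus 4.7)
The plan is a compactness--contradiction argument, paralleling the analogous rigidity statements in the harmonic setting \cite{CNCPAM,CNInvent}. Fix $\gamma \in (0,1/2)$ and $\varepsilon > 0$ and suppose the lemma fails. Taking $\delta_i = 1/i$ and $q_i = i$, I extract stationary biharmonic maps $f_i \in H^2_\Lambda(B_4(0),N)$ and radii $r_i \in (0,1/2)$ at which $\Theta_{f_i}(0,2r_i) - \Theta_{f_i}(0,\gamma^i r_i)$ is well-defined and bounded by $\delta_i$, while $f_i$ fails to be $(\varepsilon,2r_i,0)$-homogeneous at $0$.

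The next step is to rescale: set $\tilde f_i(y) := T_{0,2r_i}f_i(y) = f_i(2r_i y)$, which is defined on $B_{2/r_i}(0) \supset B_4(0)$ since $r_i < 1/2$. Scale-invariance of the quantity \eqref{ThetaDef} yields
\[
\Theta_{\tilde f_i}(0,1) - \Theta_{\tilde f_i}(0,\gamma^i/2) = \Theta_{f_i}(0,2r_i) - \Theta_{f_i}(0,\gamma^i r_i) \le \delta_i,
\]
and by monotonicity together with the $H^2_\Lambda$ bound, $\Theta_{\tilde f_i}(0,s) \le \Lambda$ for a.e.\ $s \in (0,2]$. Standard compactness theory for stationary biharmonic maps with uniformly bounded monotonic quantity (see e.g.\ \cite{Wang2,Scheven}) then furnishes, along a subsequence, a map $f_\infty: \Real^m \to N$ such that $\tilde f_i \rightharpoonup f_\infty$ weakly in $W^{2,2}_{\mathrm{loc}}(\Real^m,\Real^\ell)$ and strongly in $W^{1,2}_{\mathrm{loc}}$.

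I would then show $f_\infty$ is $0$-homogeneous on $B_1(0)$. Applying the annular identity \eqref{AnnularMonotone} to $\tilde f_i$ between radii $1$ and any fixed $\rho \in (0,1)$, once $i$ is large enough that $\gamma^i/2 < \rho$,
\[
4 \int_{B_1 \setminus B_\rho} \left( \frac{|\nabla \partial_X \tilde f_i|^2}{|y|^{m-2}} + (m-2)\frac{|\partial_X \tilde f_i|^2}{|y|^m} \right) dy \le \Theta_{\tilde f_i}(0,1) - \Theta_{\tilde f_i}(0,\gamma^i/2) \le \delta_i.
\]
On $B_1 \setminus B_\rho$ the two weights are bounded; writing $\partial_X \tilde f_i = y^j \partial_j \tilde f_i$, the second integrand passes to the limit by strong $W^{1,2}_{\mathrm{loc}}$ convergence and the first is lower semicontinuous under weak $W^{2,2}_{\mathrm{loc}}$ convergence (the $\nabla$ piece converging strongly, the $\nabla^2$ piece weakly). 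Letting $i \to \infty$ and then $\rho \to 0$ forces $\partial_X f_\infty \equiv 0$ a.e.\ on $B_1(0)$, so $f_\infty$ agrees a.e.\ on $B_1(0)$ with its radial extension $h : \Real^m \to N$, which is $0$-homogeneous.

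The contradiction is then immediate: strong $L^2(B_1(0))$ convergence yields
\[
\fint_{B_1(0)} |T_{0,2r_i}f_i - h|^2 = \fint_{B_1(0)} |\tilde f_i - h|^2 \longrightarrow 0,
\]
so $f_i$ \emph{is} $(\varepsilon,2r_i,0)$-homogeneous at $0$ for all sufficiently large $i$, contradicting the choice of $f_i$. The main technical hurdle I anticipate is the limit step above: namely, justifying that the weighted quadratic form on the right-hand side of \eqref{AnnularMonotone} is lower semicontinuous under the mixed strong $W^{1,2}$/weak $W^{2,2}$ convergence provided by the compactness theory. The remaining pieces---producing the sequence, performing the rescaling, and upgrading $\partial_X f_\infty \equiv 0$ to $0$-homogeneity---are comparatively routine.
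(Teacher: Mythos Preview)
Your argument is correct and follows the same compactness--contradiction route as the paper. The one notable simplification in the paper's version is that it never touches the first term in \eqref{AnnularMonotone}: since $|y|\le 1$ on $B_1(0)$, the second term alone already gives
\[
\int_{B_1(0)}\frac{|\partial_X f_\infty|^2}{|y|^2}\,dy \;\le\; \liminf_{i\to\infty}\int_{B_1(0)}\frac{|\partial_X \tilde f_i|^2}{|y|^m}\,dy \;\le\; \frac{1}{4(m-2)}\liminf_{i\to\infty}\delta_i = 0,
\]
and this passes to the limit using only strong $W^{1,2}_{\mathrm{loc}}$ convergence. So the ``main technical hurdle'' you flag---lower semicontinuity of the $|\nabla\partial_X\tilde f_i|^2$ term under weak $W^{2,2}$ convergence---can simply be dropped.
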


\begin{proof}
Assume there exist $\varepsilon>0$ and $0<\gamma<1/2$ for which the statement is false. Again we assume that $r=1$. Then there exists a sequence of stationary biharmonic maps $f_i:B_4(0) \to N^n$ with $f_i \in H^2_\Lambda(B_4,N)$ and $\Theta_{f_i}(0,2) - \Theta_{f_i}( 0,\gamma^i ) \leq i^{-1}$ but each $f_i$ is not $(\varepsilon, 2,0)$-homogeneous. As $f_i \to f$ strongly in $W^{1,2}(B_3)$ and $|\partial_Xf| = |s \partial_s f|$ for $s(x): = |x|$, \eqref{AnnularMonotone} implies that
\[
\int_{B_1(0)}  \frac{|\partial_X f|^2}{|y|^2} \,dy \leq \liminf \int_{B_1(0)} \frac{|\partial_X f_i|^2}{|y|^2} \, dy \leq \liminf \int_{B_1(0)}  \frac{|\partial_X f_i|^2}{|y|^{m}}\, dy\leq 0.
\]Thus, $|\partial_s f| =0$ a.e. and we conclude that  $f$ is $(0,2,0)$-homogeneous. By the strong convergence of a subsequence, $f_i$ to $f$ in $L^2$, there exists $i$ sufficiently large such that
\[
\fint_{B_{2}(0)}|f_i-f|^2 < \varepsilon.
\]Thus, $f_i$ is $(\varepsilon,2,0)$-homogeneous which gives a contradiction.
\end{proof}

\begin{defn}
For a stationary biharmonic map $f\in H^2_\Lambda(B_4(0),N)$, $x \in B_1(0)$ and $0\leq s <t<1$, define
\[
\mathcal W_{s,t}(x,f):=\Theta_f(x,t) - \Theta_f(x,s)\geq 0.
\]This quantity is well-defined for all $x \in B_1(0)$ and a.e. $0 \leq s<t<1$.
\end{defn}Notice that, by \eqref{AnnularMonotone}, for $(s_1, t_1), (s_2, t_2)$ with $t_1 \leq s_2$, 
\[
\mathcal W_{s_1,t_1}(x,f)+ \mathcal W_{s_2,t_2}(x,f) \leq \mathcal W_{s_1,t_2}(x,f)
\] with equality iff $t_1=s_2$.

For all $x \in B_1(0)$ and $0<\gamma<1/2$, fix $q \in \mathbb Z^+$. For all $j \in \mathbb Z^+$, the quantity $\mathcal W_{s,t}(x,f)$ is well-defined for at least one pair $(s_j,t_j) \in [\gamma^{j+q+1/2},\gamma^{j+q}] \times [\gamma^{j},\gamma^{j-1/2}]$. We let $Q$ be the number of positive integers $j$ such that
\[
\mathcal W_{s_j,t_j}(x,f)>\delta.
\]We note that
\begin{equation}\label{Nbound}
Q \leq (q+3)\Lambda \delta^{-1}.
\end{equation}If not, there would exist $\Lambda \delta^{-1}$ disjoint annuli defined by $(s_i, t_i)$ with $\mathcal W_{s_i,t_i}(x,f)>\delta$. But then
\[
\Lambda < \sum_{i=1}^Q\mathcal W_{s_i,t_i}(x,f) \leq \mathcal W_{0,1}(x,f)\leq \Theta_f(0,4)
\]which contradicts \eqref{UEB}.

\begin{defn}
For a fixed $0<\gamma<1/2$ and $q \in \mathbb Z^+$, for $j\in \mathbb Z^+$, let $A_j$ denote the region in $\Real^2$ such that
\[
A_j:=\{(s,t)|\, \gamma^{j+q+1/2}\leq s \leq\gamma^{j+q}, \gamma^{j}\leq t \leq \gamma^{j-1/2}\}.
\]Notice that for $(s,t) \in A_j$, $s <t$.
\end{defn}

Following \cite{CNCPAM,CNInvent,CHNMCF,CHNHF}, for each $x \in B_1(0)$, we define a sequence $\{T_j(x)\}_{j \geq 1}$ with values in $\{0,1\}$ in the following manner. For a fixed $0<\gamma<1/2$ and $q\in \mathbb Z^+$, for each $j \in \mathbb Z^+$ define
\[
T_j(x) = \left\{ \begin{array}{ll}1,& \text{if } \mathcal W_{s,t}(x,f) > \delta \text{ for all }(s,t) \in A_j \text{ for which } \mathcal W_{s,t}(x,f) \text{ is defined}, \\
0, & \text{if } \mathcal W_{s,t}(x,f) \leq \delta \text{ for at least one pair }(s,t) \in A_j .\end{array} \right.
\]

Thus, \eqref{Nbound} implies that there exist at most $Q$ nonzero entries in the sequence.
For each $\beta$-tuple $\{T_j^\beta\}_{1 \leq j \leq \beta}$, we define the set
\[
E_{T^\beta}(f)=\{ x \in B_1(0)|T_j(x) =T_j^\beta \text{ for } 1 \leq j \leq \beta\}.
\]While a priori there are $2^\beta$ possible sets $E_{T^\beta}(f)$, the bound on the number of non-zero entries and the fact that $\binom{\beta}{Q}  \leq \beta^{Q}$ implies that the number of nonempty set $E_{T^\beta}(f)$ is at most $\beta^{Q}$. Thus, for each $\beta$, we have decomposed $B_1(0)$ into at most $\beta^{Q}$ non-empty sets $E_{T^\beta}(f)$.

\begin{lem}[Covering Lemma]
There exists $c_0(m)<\infty$ such that each set $\mathcal S^j_{\eta,\gamma^\beta}(f) \cap E_{T^\beta}(f)$ can be covered by at most $c_0(c_0\gamma^{-m})^{Q}(c_0 \gamma^{-j})^{\beta-Q}$ balls of radius $\gamma^\beta$.
\end{lem}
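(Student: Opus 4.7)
The plan is to build the cover inductively on $k = 1, \ldots, \beta$, at each step refining a cover by balls of radius $\gamma^{k-1}$ into one by balls of radius $\gamma^k$, and tracking how the cover grows according to $T_k^\beta$. Before starting, I would fix the constants in the only compatible order: given $\eta > 0$, pick $\tau$ sufficiently small (say $\tau \leq \gamma/2$) so that a $\tau r$-tube about a $j$-plane inside $B_r$ can be covered by $c_0(m)\gamma^{-j}$ balls of radius $\gamma r$; apply Cone-Splitting (Lemma \ref{conesplitting}) with $(\eta, \tau)$ to extract $\varepsilon$; and apply Quantitative Rigidity (Lemma \ref{QRlemma}) with this $\varepsilon$ and $\gamma$ to produce $\delta$ and $q$. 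These are exactly the constants used in defining $T_j$ and $E_{T^\beta}(f)$.

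For the inductive step, suppose $\mathcal S^j_{\eta, \gamma^\beta}(f) \cap E_{T^\beta}(f)$ is covered by $N_{k-1}$ balls of radius $\gamma^{k-1}$. If $T_k^\beta = 1$, no structural information on the set is available, and I simply subdivide each old ball by a standard Euclidean volume packing at scale $\gamma^k$; this contributes a multiplicative factor of at most $c_0 \gamma^{-m}$, so $N_k \leq c_0 \gamma^{-m} N_{k-1}$. If $T_k^\beta = 0$, then every point $x$ in the set satisfies $T_k(x) = 0$, so there is a pair $(s, t) \in A_k$ with $\mathcal W_{s,t}(x, f) \leq \delta$; Lemma \ref{QRlemma} then yields that $f$ is $(\varepsilon, 2r, 0)$-homogeneous at $x$ at a scale $2r$ comparable to $\gamma^{k-1}$. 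Fixing a base point $x_y$ in each old ball $B_{\gamma^{k-1}}(y)$ that meets the set and using the inclusion $\mathcal S^j_{\eta, \gamma^\beta}(f) \subset \mathcal S^j_{\eta, r}(f)$, Cone-Splitting then confines every other point of the set in $B_{\gamma^{k-1}}(y)$ to a $\tau\gamma^{k-1}$-tube about some $j$-plane $V_y$. This tube, intersected with $B_{\gamma^{k-1}}(y)$, is covered by at most $c_0 \gamma^{-j}$ balls of radius $\gamma^k$, giving $N_k \leq c_0 \gamma^{-j} N_{k-1}$.

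Iterating across all $\beta$ steps, exactly $Q$ are of the first type and $\beta - Q$ of the second, yielding $N_\beta \leq (c_0 \gamma^{-m})^Q (c_0 \gamma^{-j})^{\beta - Q}$; the leading factor $c_0$ in the statement absorbs the initial cover of $B_1(0)$ together with any constant slack incurred in matching the annulus $A_k$ to the inductive scale via monotonicity.

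The principal technical obstacle is aligning the homogeneity scale $2r \in [\gamma^k, \gamma^{k-1/2}]$ supplied by $A_k$ and QR with the ambient inductive scale $\gamma^{k-1}$, since Cone-Splitting only immediately acts on the smaller ball $B_r(x_y)$. Bridging this gap requires either an intermediate pre-refinement step, or auxiliary monotonicity comparisons exploiting that all of $\mathcal S^j_{\eta,\gamma^\beta} \subset \mathcal S^j_{\eta,s}$ for every $s \in [\gamma^\beta, 1]$, combined with the smallness condition $\tau \ll \gamma$. These compatibility choices must be locked in at the very start, before $\delta$ and $q$ are extracted and used to define the sets $E_{T^\beta}(f)$; once fixed, the remaining argument is bookkeeping.
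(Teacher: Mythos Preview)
Your proposal is correct and follows essentially the same approach as the paper: fix $\tau$ in terms of $\gamma$, feed $(\eta,\tau)$ into Cone-Splitting to get $\varepsilon$, feed $(\varepsilon,\gamma)$ into Quantitative Rigidity to get $(\delta,q)$, then refine inductively, using the trivial $c_0\gamma^{-m}$ bound when $T_k=1$ and the $j$-plane bound $c_0\gamma^{-j}$ when $T_k=0$.

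The ``principal technical obstacle'' you flag, however, is self-inflicted by an off-by-one in your indexing. You use $T_k$ at the step where the ambient balls have radius $\gamma^{k-1}$; since $A_k$ produces $t\in[\gamma^k,\gamma^{k-1/2}]$, the homogeneity scale you extract is one $\gamma$-step smaller than the ball, forcing the pre-refinement or monotonicity patch you describe. The paper instead uses $T_\beta$ at the step where the covering balls already have radius $\gamma^\beta$ (and are centered at points of the set), refining to radius $\gamma^{\beta+1}$. Then $T_\beta=0$ yields, via $A_\beta$ and QR, that $f$ is $(\varepsilon,2\gamma^\beta,0)$-homogeneous at the center $x$, and Cone-Splitting applies with $r=\gamma^\beta$ and $\tau=\gamma$ to give a $\gamma^{\beta+1}$-tube about a $j$-plane inside $B_{\gamma^\beta}(x)$---exactly the scale needed. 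With this alignment no bridging step is required; your proposed fixes would also work, but are unnecessary once the index is shifted.
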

\begin{proof}For fixed $\eta, \gamma$, let $\tau=\gamma$ and choose $\varepsilon$ as in Lemma \ref{conesplitting}. For this $\varepsilon, \gamma$, by Lemma \ref{QRlemma} there exist $\delta>0$ and $q \in \mathbb Z^+$ such that if $\Theta(x,2\gamma^{j})-\Theta(x,\gamma^{j+q} ) \leq \delta$ then $f$ is $(\varepsilon,2\gamma^{j},0)$-homogenous at $x$. Fix this $\delta,q$ throughout the proof and define $A_j$, $T_j(x)$ accordingly.

We now determine the covering. We begin by choosing, for $\beta=0$, a minimal covering of $\mathcal S^j_{\eta,1}(f)\cap B_1(0)$ by balls of radius $1$ with centers in $\mathcal S^j_{\eta,1}(f)\cap B_1(0)$. Next, note that $\mathcal S^j_{\eta, \gamma^{\beta + 1}}(f) \subset \mathcal S^j_{\eta, \gamma^\beta}(f)$. Moreover, if we let $T^\beta$ denote the $\beta$-tuple obtained by dropping the last entry from $T^{\beta+1}$ then we immediately have $E_{T^{\beta+1}}(f)\subset E_{T^\beta}(f)$. 

We determine the covering recursively. For each ball $B_{\gamma^\beta}(x)$ in the covering of $\mathcal S^j_{\eta,\gamma^\beta} (f)\cap E_{T^\beta}(f)$, take a minimal covering of $B_{\gamma^\beta}(x)\cap \mathcal S^j_{\eta,\gamma^\beta}(f) \cap E_{T^\beta}(f)$ by balls of radius $\gamma^{\beta+1}$. There are now two possibilities. If $T^{\beta}_{\beta}=1$, then every $x \in E_{T^\beta}(f)$ has $T_\beta(x)=1$ and every $y \in B_{\gamma^\beta}(x)\cap E_{T^\beta}(f)$ satisfies the property $\mathcal W_{s,t}(y,f) > \delta$ { for all }$(s,t) \in A_{\beta}$ for which the quantity is defined. Thus we can only bound the number of balls geometrically to get an upper bound on the covering by
\[
c(m) \gamma^{-m}.
\]On the other hand, if $T_{\beta}^\beta=0$, we can do better. Then every $x \in E_{T^\beta}(f)$ has $T_\beta(x)=0$ and $\mathcal W_{s,t}(x,f) \leq \delta${ for at least one pair }$(s,t) \in A_{\beta}$. By choice of $\delta, q$, this implies that $f$ is $(\varepsilon, 2\gamma^{\beta},0)$ homogeneous at $x$. Since $x \in \mathcal S^j_{\eta,\gamma^\beta}(f)$, we can apply Lemma \ref{conesplitting} to conclude that every $y \in E_{T^\beta}(f)\cap B_{\gamma^{\beta}}(x)$ is contained in a $\gamma^{\beta+1}$ tubular neighborhood of some $j$ dimensional plane $V$. Therefore, in this case we can cover the intersection with the smaller number of balls
\[
c(m)\gamma^{-j}.
\]Given any $\beta>0$ and $E_{T^\beta}(f)$, the number of times we need to apply the weaker estimate is bounded above by $Q$. Thus, the proof is complete.
\end{proof}

We are now ready to prove Theorem \ref{minkowskibound}.

\begin{proof}[Proof of Theorem \ref{minkowskibound}]
Choose $\gamma<1/2$ such that $\gamma\leq c_0^{-2/\eta}$, where $c_0$ is as in Lemma $3.7$. Then $c_0^\beta \leq (\gamma^\beta)^{-\eta/2}$ and since exponentials grow faster than polynomials,
\[
\beta^Q\leq c(Q)c_0^\beta \leq c(\eta,m,Q) (\gamma^\beta)^{-\eta/2}.
\]Since
\[
\mathrm{Vol}(B_{\gamma^\beta}(x))= \omega_m\gamma^{\beta m},
\]and $B_1(0)$ can be decomposed into $\beta^Q$ sets $E_{T^\beta}(f)$ for any $\beta$,
\begin{align*}
\mathrm{Vol}(\mathcal S^j_{\eta,\gamma^\beta}\cap B_1(0)) &\leq \beta^Q c_0\left[(c_0\gamma^{-m})^Q(c_0\gamma^{-j})^{\beta-Q}\right]\omega_m \gamma^{\beta m}\\
&\leq c(m,Q,\eta) \beta^Q c_0^\beta(\gamma^\beta)^{m-j}\\
&\leq c(m,Q,\eta)(\gamma^\beta)^{m-j-\eta}.
\end{align*}Thus, for any $0<r<1$, there exists $\beta>0$ such that $\gamma^{\beta+1} \leq r < \gamma^\beta$. Then
\begin{align*}
\mathrm{Vol}(\mathcal S^j_{\eta,r} \cap B_1(0)) &\leq \mathrm{Vol}(\mathcal S^j_{\eta,\gamma^\beta} \cap B_1(0))\\
&\leq c(m,Q,\eta)(\gamma^\beta)^{m-j-\eta}\\
& \leq c(m,Q,\eta)(\gamma^{-1}r)^{m-j-\eta}\\
&\leq c(m,\eta,N,\Lambda) r^{m-j-\eta}.
\end{align*}
\end{proof}

\section{Improved Regularity Results}

In this section we determine improved regularity results for minimizing biharmonic maps and stationary biharmonic maps into special targets. The regularity results do not require a small energy hypothesis and the $L^p$ estimates are both dimension independent and sharp. 
The $L^p$ bounds on $\nabla^k f$, $k=1,\dots, 4$, are a corollary of a much stronger estimate on the $L^p$ bounds on the reciprocal of the regularity scale function. Finally, we demonstrate via a covering argument that when the singular set is isolated, there exists a uniform bound on the number of singular points in a compact subset. 

For certain targets, we demonstrate higher regularity. The targets of interest have one of the following two forms.
\begin{defn}
We say that $N$ satisfies:
\begin{itemize}
\item {\bf Condition M} if there exists some $k_0 \geq 4$ such that for all $4 \leq k \leq k_0$, there are no $0$-homogeneous, non-constant minimizing biharmonic maps $v \in C^\infty(\Real^{k+1} \backslash \{0\},N)$.\vskip .1in
\item {\bf Condition S} if there exists $k_0\geq 4$ such that for all $4 \leq k \leq k_0$, there are no $0$-homogeneous, non-constant biharmonic maps $v \in C^{\infty}(\Real^{k+1}\backslash\{0\},N)$.
\end{itemize}
\end{defn}
The $L^p$ estimates on derivatives of $f$ are determined by first demonstrating that the region on which $f$ does not possess scale invariant $C^4$ bounds is small. 
We define a function that captures the largest radius about a point on which the function does possess scale invariant $C^4$ bounds. 
\begin{defn}\label{regscale}
Let $r_{0,f}(x)$ be the largest $r$ such that $f\in C^4(B_r(x))$. We denote the \emph{regularity scale of $f$ at $x$} as
\[
r_f(x):=\max\{0\leq r\leq r_{0,f}(x): \sup_{B_r(x)} r|\nabla f| + r^2 |\nabla^2 f|+r^3|\nabla^3 f| + r^4|\nabla^4f| \leq 1\}.
\]
\end{defn}
Notice that if $r_f(x)=r>0$, then $f \in C^\infty(B_{\frac r2}(x))$ (see \cite{ChangWangYang,Wang,Wang2}).

Given a measurable map $f:\Omega \subset\Real^m \to N^n$ and any $r>0$ define the set
\[
\mathcal B_r(f):= \{x \in \Omega: r_f(x) \leq r\}.
\]
The volume of the set of points in $\mathcal B_r(f)$, for any fixed $r>0$, will be controlled using the bound we determined in Theorem \ref{minkowskibound}. 
\begin{thm}\label{MinMinkowski}
Let $f:B_4(x)\subset \Real^m \to N^n$ be a biharmonic map with $f \in H^2_\Lambda(B_4(x),N)$. 
\begin{itemize}
\item If $f$ is minimizing then for all $\eta>0$, there exists $C=C(m,N,\Lambda,\eta)$ such that for any $0<r<1$
\[
\mathrm{Vol}(T_r(\mathcal B_r(f))\cap B_1(x)) \leq Cr^{5-\eta}.
\]For minimizing biharmonic maps, this bound implies that
\[
\dim_{\mathrm{Min}}\mathcal S(f) \leq m-5.
\]
\item
If $f$ is minimizing and $N$ satisfies Condition M or if $f$ is stationary and N satisfies Condition S, then for all $\eta>0$, there exists $C=C(m,N,\Lambda,\eta)$ such that for any $0<r<1$
\[
\mathrm{Vol}(T_r(\mathcal B_r(f))\cap B_1(x)) \leq Cr^{k_0+2-\eta}.
\]
In particular, with the additional hypothesis on the target manifold,
\[
\dim_{\mathrm{Min}}\mathcal S(f) \leq m-(k_0+2).
\]
\end{itemize}
\end{thm}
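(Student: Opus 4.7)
The plan is to reduce both bullets of Theorem \ref{MinMinkowski} to the Minkowski estimate of Theorem \ref{minkowskibound} via an $\varepsilon$-regularity statement, which is the role of Theorem \ref{epsreg}. Concretely, Theorem \ref{epsreg} should say: there exist $\varepsilon_0,c_0>0$ (depending on $m,N,\Lambda$) such that if $f$ is a minimizing biharmonic map which is $(\varepsilon_0,s,m-4)$-homogeneous at $x$ for some $s\in(0,1]$, then $r_f(x)\geq c_0 s$. Granting this (and its analogue for the special-target cases discussed below), the proof of the first bullet is essentially a bookkeeping of scales.

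For the containment, fix $0<r<1$ and suppose $x\in B_1(x_0)$ with $x\notin \mathcal S^{m-5}_{\varepsilon_0,r}(f)$. By Definition \ref{qssdef} there exists $s\in[r,1]$ such that $f$ is $(\varepsilon_0,s,m-4)$-homogeneous at $x$, and the $\varepsilon$-regularity above gives $r_f(x)\geq c_0 s\geq c_0 r$. Taking the contrapositive,
\[
\mathcal B_{c_0 r}(f)\cap B_1(x_0)\subset \mathcal S^{m-5}_{\varepsilon_0,r}(f),
\]
and rescaling $r\mapsto r/c_0$ yields $\mathcal B_r(f)\cap B_1(x_0)\subset \mathcal S^{m-5}_{\varepsilon_0,r/c_0}(f)$. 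Enlarging tubular neighborhoods (since $c_0\leq 1$) gives
\[
T_r(\mathcal B_r(f))\cap B_1(x_0)\subset T_{r/c_0}\bigl(\mathcal S^{m-5}_{\varepsilon_0,r/c_0}(f)\bigr)\cap B_{2}(x_0),
\]
and Theorem \ref{minkowskibound} (applied with $k=m-5$ and radius $r/c_0$, with $\eta$ absorbing the constant $c_0$) yields $\mathrm{Vol}(T_r(\mathcal B_r(f))\cap B_1(x_0))\leq C r^{5-\eta}$. The Minkowski dimension bound $\dim_{\mathrm{Min}}\mathcal S(f)\leq m-5$ then follows from the fact that $\mathcal S(f)\subset \mathcal B_r(f)$ for every $r>0$ (singular points have regularity scale zero), which forces $T_r(\mathcal S(f))\subset T_r(\mathcal B_r(f))$ and hence the volume bound for $\mathcal S(f)$ holds for every $r$; together with the definition of Minkowski dimension this gives the claim.

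For the second bullet, Conditions M and S play the role that Scheven's result plays in the first bullet. By a standard Federer-type dimension reduction using the monotonicity formula together with the non-existence of $0$-homogeneous (minimizing, resp.\ stationary) biharmonic maps on $\mathbb R^{k+1}$ for $4\leq k\leq k_0$, one obtains that every tangent map at a singular point must live on a linear subspace of dimension at most $m-k_0-2$; equivalently $\mathcal S(f)=\mathcal S^{m-k_0-2}(f)$. The very same quantitative/compactness argument that produces Theorem \ref{epsreg} then yields an $\varepsilon$-regularity of the form: if $f$ is $(\varepsilon_0,s,k_0+1)$-homogeneous at $x$, then $r_f(x)\geq c_0 s$. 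Repeating the containment argument of the previous paragraph verbatim with $k=m-k_0-2$ gives $\mathcal B_r(f)\cap B_1(x_0)\subset \mathcal S^{m-k_0-2}_{\varepsilon_0,r/c_0}(f)$, and a second application of Theorem \ref{minkowskibound} yields the bound $C r^{k_0+2-\eta}$, from which $\dim_{\mathrm{Min}}\mathcal S(f)\leq m-(k_0+2)$ follows exactly as before.

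The main obstacle I expect is the $\varepsilon$-regularity itself (i.e., Theorem \ref{epsreg}), and in particular its extension to the stationary setting under Condition S. The quantitative stratification step and the reduction above are essentially formal once $\varepsilon$-regularity is available; the real work is to upgrade almost-$(m-4)$-homogeneity to a uniform $C^4$ bound. For minimizers this should proceed by a contradiction/compactness argument: a failing sequence converges (after passing to the blow-up) to a $0$-homogeneous minimizing biharmonic map on $\mathbb R^{m-4+1}$, which is smooth by the classical regularity of minimizers in low dimensions, and then higher regularity is bootstrapped back to the sequence via uniform Schauder/Morrey estimates for the biharmonic system. The stationary version requires replacing the low-dimensional smoothness of minimizers with the Condition S hypothesis, so the bootstrap of higher-derivative bounds from $C^0$-closeness to a smooth homogeneous map is the most delicate step.
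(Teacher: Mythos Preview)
Your reduction is correct and essentially identical to the paper's proof: invoke Theorem \ref{epsreg} to obtain $\mathcal B_r(f)\subset \mathcal S^{m-5}_{\varepsilon,2r}$ (respectively $\mathcal S^{m-k_0-2}_{\varepsilon,2r}$), then apply Theorem \ref{minkowskibound}. One indexing slip in the second bullet: the $\varepsilon$-regularity hypothesis should read $(\varepsilon_0,s,m-k_0-1)$-homogeneous, not $(\varepsilon_0,s,k_0+1)$-homogeneous, since the third argument in Definition \ref{homdef} counts invariant directions rather than the effective domain dimension; with that correction your containment $\mathcal B_r(f)\cap B_1\subset \mathcal S^{m-k_0-2}_{\varepsilon_0,r/c_0}(f)$ is exactly right.
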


We bound the $L^p$ norm for the reciprocal of $r_f$ by considering the $L^p$ norm over sets of points with regularity scale between $r_i:=2^{-i}$ and $r_{i+1}$. The volume estimate above allows us to conclude that the upper bounds on these integrals are summable in $i$. 
\begin{cor}\label{MinCor}
Let $f:B_4(x)\subset \Real^m \to N^n$ be a biharmonic map with $f \in H^2_\Lambda(B_4(x),N)$. 
\begin{itemize} 
\item If $f$ is minimizing then for all $1\leq p<5$, there exists $C=C(m,N,\Lambda,p)$ such that
\[
\int_{B_1(x)} \sum_{\ell=1}^4|\nabla^\ell f|^{p/\ell}  \leq 4\int_{B_1(x)} r_f^{-p} <C.
\]
\item If $f$ is minimizing and $N$ satisfies Condition M or $f$ is stationary and $N$ satisfies Condition S, then for all $1\leq p<k_0+2$, there exists $C=C(m,N,\Lambda,p)$ such that
\[
\int_{B_1(x)} \sum_{\ell=1}^4 |\nabla^\ell f|^{p/\ell}\leq 4\int_{B_1(x)} r_f^{-p} <C.
\]
\end{itemize}
\end{cor}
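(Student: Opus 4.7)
The plan is to derive Corollary \ref{MinCor} from Theorem \ref{MinMinkowski} via a dyadic layer-cake argument. The first step is the pointwise inequality $\sum_{\ell=1}^4 |\nabla^\ell f(y)|^{p/\ell} \le 4\, r_f(y)^{-p}$ for $y \in B_1(x) \setminus \mathcal{S}(f)$. This is immediate from Definition \ref{regscale}: if $r_f(y) > 0$, then in particular $r_f(y)^\ell |\nabla^\ell f(y)| \le 1$ for each $\ell = 1, \dots, 4$, so raising $|\nabla^\ell f(y)| \le r_f(y)^{-\ell}$ to the $p/\ell$ power gives $|\nabla^\ell f(y)|^{p/\ell} \le r_f(y)^{-p}$. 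Summing over $\ell$ and integrating reduces the problem to bounding $\int_{B_1(x)} r_f^{-p}$.

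Next, I would perform a dyadic decomposition according to the size of $r_f$. Set $r_i := 2^{-i}$ and $A_i := \{y \in B_1(x) : r_{i+1} < r_f(y) \le r_i\}$. By Theorem \ref{MinMinkowski} the set $\mathcal{S}(f) = \{r_f = 0\}$ has Minkowski dimension at most $m-5$ (respectively $m-(k_0+2)$), hence Lebesgue measure zero, so
\[
\int_{B_1(x)} r_f^{-p} \le \mathrm{Vol}(B_1(x)) + \sum_{i \ge 0} r_{i+1}^{-p}\, \mathrm{Vol}(A_i),
\]
where the first term bounds the contribution of $\{r_f > 1\}$ on which $r_f^{-p} \le 1$. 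By Definition \ref{regscale}, $\{r_f \le r_i\}$ is exactly $\mathcal{B}_{r_i}(f)$, so $A_i \cap B_1(x) \subset \mathcal{B}_{r_i}(f) \cap B_1(x) \subset T_{r_i}(\mathcal{B}_{r_i}(f)) \cap B_1(x)$, and Theorem \ref{MinMinkowski} in the minimizing case gives $\mathrm{Vol}(A_i \cap B_1(x)) \le C_\eta\, r_i^{5-\eta}$ for any $\eta > 0$.

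Combining these ingredients, the tail sum is bounded by $C_\eta \sum_{i \ge 0} 2^{(i+1)p} \cdot 2^{-i(5-\eta)} = 2^p C_\eta \sum_{i \ge 0} 2^{i(p - 5 + \eta)}$, a geometric series that converges precisely when $p + \eta < 5$. For fixed $p < 5$ I set $\eta := (5 - p)/2 > 0$ to obtain a finite bound depending only on $m$, $N$, $\Lambda$, $p$. In the Condition M or Condition S setting, Theorem \ref{MinMinkowski} replaces the exponent $5$ by $k_0 + 2$, and the identical argument yields convergence for all $p < k_0 + 2$.

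There is no genuine obstacle beyond invoking Theorem \ref{MinMinkowski}; the corollary is essentially a bookkeeping consequence once the volume estimate on $\mathcal{B}_r(f)$ is in hand. The one point to note carefully is that the equality $\{r_f \le r\} = \mathcal{B}_r(f)$ holds by definition, so the Minkowski-type bound applies verbatim to the superlevel sets of $r_f^{-1}$, and the sharpness of the exponent $5$ (respectively $k_0 + 2$) in the $L^p$ range mirrors the sharpness of the codimension in Theorem \ref{MinMinkowski}.
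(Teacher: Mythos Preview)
Your proof is correct and follows precisely the approach the paper sketches in the paragraph preceding the corollary: the pointwise bound $|\nabla^\ell f|^{p/\ell}\le r_f^{-p}$ from Definition~\ref{regscale}, the dyadic decomposition $\{r_{i+1}<r_f\le r_i\}$ with $r_i=2^{-i}$, and summability of the resulting geometric series via the volume estimate of Theorem~\ref{MinMinkowski}. The paper does not write out the details, so your argument is exactly the intended one.
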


\begin{rem}In the harmonic setting, the sharpness of the result was shown by considering the minimizing, harmonic projection map $f(x) = \frac x{|x|}$. We use the same map to verify the optimality of the first result here for $m=5$. Indeed, the map $f:B_1^5 \to \mathbb S^{4}$ with $f(x) = \frac x{|x|}$ is also minimizing biharmonic. Since such a map is harmonic, it is clearly an \emph{intrinsic} biharmonic map (i.e., a critical point for the energy $\int_\Omega |(\Delta f)^T|^2$). For our purposes, it is important to recognize that the map is also a minimizer for the biharmonic energy defined in \eqref{bieq}. (See the appendix of \cite{HongWang} for a more thorough treatment of this fact.) As $|\nabla \frac {x}{|x|}|\sim 1/r$, $|\nabla f|$ is clearly not in $L^5$.
\end{rem}
Finally, following the work of \cite{NVV}, we show that in settings where $\dim \mathcal S(f) =0$, there exists a uniform upper bound on the number of singular points on a compact set.
\begin{thm}\label{counting}
Let $f:B_4(x)\subset \Real^m \to N^n$ denote a biharmonic map with $f\in H^2_\Lambda(B_{4}(x),N)$.
\begin{itemize}
\item If $f$ is minimizing and $m=5$ then
\[
|\mathcal S(f) \cap B_1(x)| \leq C(N, \Lambda).
\]
\item If $f$ is minimizing and $N$ satisfies Condition $M$ or if $f$ is stationary and $N$ satisfies Condition S, then for $m=k_0+2$
\[
|\mathcal S(f) \cap B_1(x)| \leq C(N, \Lambda).
\]

\end{itemize}
\end{thm}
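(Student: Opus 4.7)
The plan is to prove an $\varepsilon$-regularity statement of the form ``almost $1$-homogeneity implies smoothness'' under the dimension/target hypotheses, use it to place every singular point into a uniform quantitative stratum $\mathcal S^{0}_{\eta_0,r}(f)$, and then apply a Naber--Valtorta-style sharp covering theorem to that stratum.

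The first step is: there exists $\eta_0=\eta_0(m,N,\Lambda)>0$ such that whenever $f$ is $(\eta_0,r,1)$-homogeneous at some $x\in B_1(x_0)$, the point $x$ is regular for $f$. The proof is by contradiction. A failing sequence $\{f_i\}$ with points $x_i$ and scales $r_i$ yields, after rescaling by $T_{x_i,r_i}$ and passing to a subsequence (using compactness of minimizers from \cite{Scheven} in the minimizing cases, and of stationary biharmonic maps in the Condition S case), a limit biharmonic map $h$ that is $1$-homogeneous on $\Real^m$. Such an $h$ is constant along a line and therefore descends to a $0$-homogeneous biharmonic map $\tilde h:\Real^{m-1}\to N$. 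When $m=5$ and $f_i$ are minimizing, $\tilde h$ is minimizing biharmonic on $\Real^{4}$ and hence smooth by \cite{ChangWangYang,Wang2}, so it is constant; when $m=k_0+2$, Condition M (respectively S) applied with $k=k_0$ directly rules out a non-constant $\tilde h$. Either way the blow-up limit is constant, and the small-energy $\varepsilon$-regularity of \cite{ChangWangYang,Wang,Wang2} then forces each $x_i$ to be regular for large $i$, contradicting the assumption.

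As an immediate consequence, $\mathcal S(f)\cap B_1(x_0)\subset \mathcal S^{0}_{\eta_0,r}(f)$ for every $0<r<1$. The main step is now to establish a sharp covering bound, free of the $r^{-\eta}$ loss appearing in Theorem~\ref{minkowskibound}: there exists $C=C(N,\Lambda)$ such that $\mathcal S^{0}_{\eta_0,r}(f)\cap B_1(x_0)$ can be covered by at most $C$ balls of radius $r$, \emph{independent of} $r$. This is the biharmonic analogue of the central covering estimate in \cite{NVV}. Following the NVV template, one picks a maximal $r$-separated subset $\{x_i\}\subset \mathcal S^{0}_{\eta_0,r}(f)\cap B_1(x_0)$, forms the discrete counting measure $\mu=\sum_i\delta_{x_i}$, and controls the Jones $L^2$ best-plane coefficients of $\mu$ at each dyadic scale by an estimate derived from the cone-splitting Lemma~\ref{conesplitting} together with the annular monotonicity identity \eqref{AnnularMonotone}. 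Feeding these bounds into the Naber--Valtorta Reifenberg-type packing theorem for measures produces the desired uniform bound on $\mu(B_1(x_0))$.

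Letting $r\to 0$ then finishes the proof: the covering bound says that every $r$-separated subset of $\mathcal S(f)\cap B_1(x_0)$ has cardinality at most $C$, and taking $r$ smaller than the minimum pairwise distance of any fixed finite subset forces $|\mathcal S(f)\cap B_1(x_0)|\le C$. The hardest part will be adapting the $L^2$ best-plane inequality of \cite{NVV} to the biharmonic setting: the monotonicity \eqref{AnnularMonotone} holds only almost everywhere in the radial variable and the compactness theory for stationary biharmonic maps is more delicate than in the harmonic or $p$-harmonic setting, so care is needed to verify that Lemmas~\ref{conesplitting} and \ref{QRlemma}, together with the available structure, suffice to close the Reifenberg iteration.
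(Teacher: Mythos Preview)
Your first step is essentially Theorem~\ref{epsreg} specialized to the cases $m=5$ (where $m-4=1$) and $m=k_0+2$ (where $m-k_0-1=1$), so that part is already established in the paper. The reduction $\mathcal S(f)\cap B_1\subset\mathcal S^{0}_{\eta_0,r}(f)$ for every $r$ is also correct.

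The substantive divergence is in how you eliminate the $r^{-\eta}$ loss. You propose the full Naber--Valtorta $\beta$-number/discrete-Reifenberg machinery applied to the $0$-dimensional stratum. In principle this could be made to work, but it is considerable overkill for $k=0$, and the step you flag as hardest --- adapting the $L^2$ best-plane inequality to the biharmonic monotone quantity~\eqref{AnnularMonotone}, whose annular drop involves both $|\nabla\partial_X f|^2$ and $|\partial_X f|^2$ with nontrivial weights and boundary terms --- is not actually carried out. So as written this is a gap, not a proof.

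The paper avoids Reifenberg entirely and uses a much more elementary argument that exploits the $0$-dimensional structure directly. Its key ingredient (Lemma~\ref{smooth}) is different in orientation from your step~1: rather than ``$(\varepsilon,r,1)$-homogeneous $\Rightarrow$ $x$ regular'', it proves ``$(\varepsilon,r,0)$-homogeneous at $x$ $\Rightarrow$ $f$ smooth on the \emph{annulus} $B_{r/2}(x)\setminus B_{r/4}(x)$''. Combined with quantitative rigidity (Lemma~\ref{QRlemma}) and the bound~\eqref{Nbound}, this gives: about any fixed point there are at most $Q=Q(m,N,\Lambda)$ dyadic annuli that can contain a singular point. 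The counting is then a short binary-tree argument: recursively halve radii, always passing to the half-ball containing the most singularities, and set $T_i=1$ whenever the count strictly drops. One has $S_0\le c(m)^{|T|}$, and each $T_i=1$ forces a singular point into a distinct dyadic annulus around the terminal center $x_{i^*}$, so $|T|\le C(m,N,\Lambda)$ by the annular bound. No $\beta$-numbers, no packing theorem.

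Your route would buy generality (the same machinery yields rectifiability and sharp Minkowski bounds for higher-dimensional strata), but at the cost of importing a deep black box whose biharmonic adaptation you have not verified. The paper's route is self-contained and tailored to the isolated-singularity setting.
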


\subsection{Compactness Results}All of the proofs are based on compactness results, some of which are only implicit in the literature. Since some results are modifications of their original form and others are not stated, at least to our knowledge, we state the compactness lemmas in the forms we will need and provide some indication of the proofs. We also note that we make frequent use of Theorems 1.5 and 1.6 of \cite{Scheven} which provide compactness theorems for sequences of minimizers and stationary maps. 

While Theorem 1.5 of \cite{Scheven} does not imply that the space of minimizing biharmonic maps is compact, results of that paper do imply the following compactness result for minimizers.
\begin{lem}\label{homogeneouslimits}
Let $f_i \to f$ in $W^{2,2}(B_2^m,N)$ such that each $f_i$ is minimizing. Then
\begin{itemize}
\item if $m=5$ and $f$ is $0$-homogeneous then $f$ is minimizing in $B_2$.
\item if $N$ satisfies Condition M and $f$ is $(m-k)$-homogeneous for any $k \leq k_0+2$, then $f$ is minimizing in $B_2$.
\end{itemize}
\end{lem}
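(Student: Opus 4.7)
The approach is to establish minimality of $f$ by taking an arbitrary competitor $v\in W^{2,2}(B_2,N)$ with $v=f$ outside some compact $K\subset B_2$ and producing, for each $i$, an admissible competitor $v_i$ for $f_i$ whose energy converges to $E(v)$; the inequality $E(f_i)\le E(v_i)$ together with $E(f_i)\to E(f)$, which follows from strong $W^{2,2}$-convergence, then yields $E(f)\le E(v)$.

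The first step is to locate the singular set of $f$. By Scheven's compactness for stationary biharmonic maps (Theorem 1.6 of \cite{Scheven}), the limit $f$ is stationary biharmonic, and the homogeneity forces $\mathcal S(f)$ to be a cone (in case 1) or a cylinder (in case 2) over its cross-sectional trace. In case 1, $f$ is $0$-homogeneous on $\Real^5$ and the stationary singular set bound $\dim\mathcal S(f)\le m-4=1$ shows $\mathcal S(f)$ is contained in a finite union of rays from the origin; in particular $f$ is smooth on $B_2$ off a $1$-dimensional cone. In case 2, $f$ is $(m-k)$-homogeneous with spine $V^{m-k}\subset\Real^m$, and a standard dimension-reduction argument on the orthogonal $k$-dimensional slice, using Condition M to exclude non-constant $0$-homogeneous minimizing-limit blow-ups in dimensions $5\le k'\le k_0+2$, gives $\mathcal S(f)\subset V$, so $f$ is smooth on $B_2\setminus V$.

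For the construction of the $v_i$, I would pick $r'\in(\sup_{y\in K}|y|,\,2)$ so that $f$ is smooth on a neighborhood of a dense subset of $\partial B_{r'}$ (outside the cone in case 1, outside $V$ in case 2); by Fubini such an $r'$ is generic. For small $\delta>0$ set
\[
v_i=\begin{cases} v &\text{on }B_{r'-\delta},\\ w_i &\text{on }B_{r'+\delta}\setminus B_{r'-\delta},\\ f_i &\text{on }B_2\setminus B_{r'+\delta},\end{cases}
\]
where the bridge $w_i$ is produced by a $W^{2,2}$ Luckhaus-type interpolation matching $v=f$ on $\partial B_{r'-\delta}$ and $f_i$ on $\partial B_{r'+\delta}$. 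In $\Real^\ell$ the bridge can be taken as an affine interpolation of (suitable extensions of) $f$ and $f_i$; on the part of the shell far from $\mathcal S(f)$, the smoothness of $f$ and the local $\varepsilon$-regularity for biharmonic maps promote strong $W^{2,2}$-convergence to local smooth convergence $f_i\to f$, ensuring the bridge is $L^\infty$-close to $N$, so that composition with the nearest-point retraction $\pi_N$ gives an $N$-valued map. On the narrow slab around the singular set of $f$, we modify $v_i$ to coincide with $f_i$; the corresponding energy contribution is $o(1)$ as $\delta\to 0$ by absolute continuity of the integral and the smallness of the singular set.

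The shell estimate reads $\|w_i\|_{W^{2,2}(\text{shell})}^2 \le 2\|f\|_{W^{2,2}(\text{shell})}^2 + C\|f_i-f\|_{W^{2,2}(\text{shell})}^2 + o(1)$, so $\limsup_i E(v_i)\le E(v)+o_\delta(1)$. Combined with $E(f_i)\le E(v_i)$ and $E(f_i)\to E(f)$, sending $i\to\infty$ and then $\delta\to 0$ gives $E(f)\le E(v)$. The principal obstacle is the construction of the biharmonic Luckhaus bridge together with the retraction step onto $N$: since $W^{2,2}$ does not continuously embed in $C^0$ when $m\ge 5$, the $L^\infty$-closeness of the bridge to $N$ required by $\pi_N$ is not automatic and must be extracted from smoothness of $f$ on a portion of the shell, which is precisely what the homogeneity (and, in case 2, Condition M) delivers.
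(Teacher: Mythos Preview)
Your approach --- build competitors $v_i$ for the $f_i$ via a Luckhaus-type bridge on a thin shell and pass to the limit --- is exactly the comparison construction of Scheven (Lemma~4.2 in \cite{Scheven}) to which the paper defers. One sharpening worth noting: in case~1 a Federer reduction actually gives $\mathcal S(f)=\{0\}$, not merely a $1$-dimensional cone of rays (a singular point $x\neq 0$ would produce a tangent map invariant in the radial direction, whose $\Real^4$-restriction is biharmonic with a singularity, contradicting full regularity in dimension $4$); the paper uses this, so the shell $B_{r'+\delta}\setminus B_{r'-\delta}$ lies entirely in the smooth region and your ``narrow slab'' modification around $\mathcal S(f)$ is unnecessary there. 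Your argument that $\dim\mathcal S(f)\le 1$ implies finitely many rays is not quite complete as stated (dimension zero on the sphere does not by itself force finiteness), but this is moot given the stronger conclusion.
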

\begin{proof}We only point out here that the proof of Lemma 4.2 in \cite{Scheven} is sufficient to prove that $f$ is minimizing. The lemma in \cite{Scheven} is concerned with tangent maps to minimizers, but the proof relies only on the homogeneity of tangent maps and the fact that the singular set of a tangent map is a linear subspace of $\Real^m$. 

For $m=5$, any $0$-homogeneous map must be smooth away from the origin and thus Scheven's comparison construction can be directly extended to prove that $f$ is minimizing.

When $f$ is $(m-k)$-homogeneous, the restriction $\hat f:B_2^{k} \times \{0\}$ is $0$-homogeneous. Moreover, Theorem 1.1 of \cite{Scheven} implies that as $N$ satisfies Condition $M$, $\hat f$ is smooth away from the origin. Thus, the same comparison construction proves that $f, \hat f$ are minimizers in spaces $B_2^m, B_2^{k}$ respectively. 
\end{proof}

The following lemma provides an $\varepsilon$-regularity result.
\begin{lem}\label{lemmatechnical}
Let $f:B_4(0)\subset \Real^m \to N^n$ denote a biharmonic map with $\|f\|_{W^{2,2}}\leq \Lambda$. If $f$ is minimizing or $f$ is stationary and $N$ satisfies Condition $S$ then there exists $\varepsilon_0=\varepsilon_0(m,N,\Lambda)>0$ such that if there exists $c \in N$ such that
\[
\fint_{B_2(0)}|f-c|^2 <\varepsilon_0
\]then $f \in C^\infty(B_1(0))$. 
\end{lem}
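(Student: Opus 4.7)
The plan is to argue by contradiction and compactness. Suppose the conclusion fails for every $\varepsilon_0 > 0$. Then there exist biharmonic maps $f_i:B_4(0) \to N$ — all minimizing, or all stationary with $N$ satisfying Condition $S$ — with $\|f_i\|_{W^{2,2}(B_4(0))} \leq \Lambda$, constants $c_i \in N$, and $\fint_{B_2(0)}|f_i - c_i|^2 \to 0$, yet each $f_i$ has a singular point $y_i \in \overline{B_1(0)}$. By compactness of $N$ and $\overline{B_1(0)}$, after extraction $c_i \to c \in N$ and $y_i \to y_\infty \in \overline{B_1(0)}$.

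First I would extract a limiting map. By the uniform $W^{2,2}$ bound and the compactness theorems of \cite{Scheven} (Theorems 1.5 and 1.6 there), after passing to a further subsequence $f_i$ converges to a biharmonic map $f_\infty$ of the same type — weakly in $W^{2,2}_{\mathrm{loc}}(B_4(0))$ and strongly in $L^2$. The strong $L^2$ convergence together with $\fint_{B_2(0)}|f_i - c_i|^2 \to 0$ and $c_i \to c$ forces $f_\infty \equiv c$ a.e.\ on $B_2(0)$.

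Next I would upgrade this to strong $W^{2,2}(B_{3/2}(0))$-convergence. In the minimizing case this is immediate, since Scheven's Theorem 1.5 delivers strong $W^{2,2}_{\mathrm{loc}}$-compactness of minimizing sequences. In the stationary case with Condition $S$, write $|\nabla^2 f_i|^2\,dy \rightharpoonup \nu$ on $B_{3/2}(0)$ (using $\nabla^2 f_\infty \equiv 0$), where $\nu$ is a nonnegative Radon measure. If $\nu \not\equiv 0$, then a standard dimension-reduction/blow-up analysis at a point of positive upper density of $\nu$, in the spirit of \cite{Lin, Scheven}, produces a nonconstant $0$-homogeneous stationary biharmonic map on $\Real^m$ whose singular set is a linear subspace; iteratively restricting to a complementary $(k+1)$-dimensional slice with $4 \leq k \leq k_0$ yields a nonconstant $0$-homogeneous biharmonic map in $C^\infty(\Real^{k+1}\setminus\{0\}, N)$, contradicting Condition $S$. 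Hence $\nu \equiv 0$ on $B_{3/2}(0)$ and $f_i \to c$ strongly in $W^{2,2}(B_{3/2}(0))$.

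Finally, I would invoke the classical small-energy $\varepsilon$-regularity theorem for stationary biharmonic maps \cite{ChangWangYang, Wang, Wang2}. Fix $r_0 \in (0, 1/4)$; then $B_{r_0}(y_i) \subset B_{3/2}(0)$ for large $i$, and strong convergence yields
\[
r_0^{4-m}\int_{B_{r_0}(y_i)}\bigl(|\nabla^2 f_i|^2 + r_0^{-2}|\nabla f_i|^2\bigr)\,dy \to 0.
\]
For $i$ large the left-hand side lies below the $\varepsilon$-regularity threshold, forcing $f_i \in C^\infty(B_{r_0/2}(y_i))$ and contradicting the assumption that $y_i$ is singular for $f_i$. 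The main obstacle is the stationary case of the promotion step: ruling out the defect measure requires a dimension-reduction/blow-up argument that identifies potential concentration cones with the smooth $0$-homogeneous blow-ups explicitly forbidden by Condition $S$, and this is where the hypothesis on $N$ enters crucially.
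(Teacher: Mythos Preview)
Your argument is correct and follows the same overall strategy as the paper: contradict, extract a limit via Scheven's compactness, identify the limit as a constant, and then apply the small-energy $\varepsilon$-regularity theorem to reach a contradiction.

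There are two differences in execution worth noting. First, in the stationary case the paper simply invokes Scheven's Theorem~1.6 as a black box to obtain strong $W^{2,2}$ convergence $f_i\to c$ on $B_2(0)$; the defect-measure and dimension-reduction step you outline is precisely what is packaged inside that theorem, so you are effectively re-sketching its proof rather than citing it. Your sketch is right in spirit but would need care to be made rigorous (the blow-ups at concentration points must be shown to be smooth away from the origin before Condition~$S$ applies), whereas the citation sidesteps this entirely. Second, once strong $W^{2,2}$ convergence to a constant is in hand, the paper verifies the small-energy hypothesis \emph{globally} on $B_2$: it uses a Nirenberg interpolation inequality to control $\int_{B_2}|\nabla f_i|^4$ by $\|f_i\|_{L^\infty}^2(\int|\nabla^2 f_i|^2+\int|\nabla f_i|^2)$, so that $2^{4-m}\int_{B_2}(|\nabla^2 f_i|^2+|\nabla f_i|^4)<\varepsilon_1$ for large $i$, giving $f_i\in C^\infty(B_1)$ directly. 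You instead localize at the moving singular points $y_i$ and use a slightly different scale-invariant quantity. Both routes work; the paper's is shorter and avoids tracking $y_i$, while yours is perhaps more in line with how one usually deploys $\varepsilon$-regularity. If you keep your version, make sure the form of the small-energy hypothesis you write matches one actually available in \cite{ChangWangYang,Wang,Wang2,Struwe}---the standard statements there use $|\nabla u|^4$ rather than $r^{-2}|\nabla u|^2$.
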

\begin{proof}
We proceed by contradiction. Suppose no such $\varepsilon_0$ exists. Then there exist $\|f_i\|_{W^{2,2}}\leq \Lambda$ and $c_i \in N$ such that
\[
\fint_{B_2(0)}|f_i-c_i|^2 <1/i
\]but each $f_i \notin C^\infty(B_1(0))$. By Theorems 1.5, 1.6 in \cite{Scheven}, a subsequence $f_i \to f$ strongly in $W^{2,2}(B_2(0))$. Extracting a further subsequence, there exists $c \in N$ such that 
\[
\|f_i - c\|_{W^{2,2}(B_2(0))} \leq \|f_i-f\|_{W^{2,2}(B_2(0))} + \|f-c\|_{W^{2,2}(B_2(0))} \to 0.
\]
Results of \cite{Wang,Struwe} imply that there exists $\varepsilon_1>0$ such that if $u$ is stationary biharmonic with
\[
2^{4-m} \int_{B_2} |\nabla^2 u|^2+ |\nabla u|^4 \,dx < \varepsilon_1 
\]then 
\[
\|u \|_{C^\infty(B_2)} \leq C(N).
\]Since $f_i \to c$ in $W^{2,2}(B_2,N)$, given $\varepsilon>0$, for any fixed $C>0$ there exists $I>0$ such that for all $i \geq I$,
\[
C\|f_i\|_{L^\infty(B_2)}^2\left(2^{4-m}\int_{B_2}|\nabla^2 f_i|^2\, dx+ 2^{2-m}\int_{B_2}|\nabla f_i|^2\,dx\right) < \varepsilon/ 2.
\]Choose $0<\varepsilon <\varepsilon_1$ above. Then by a Nirenberg interpolation inequality, for all $i \geq I$,
\[
2^{4-m}\int_{B_2}|\nabla f_i|^4 \leq C\|f_i\|_{L^\infty(B_2)}^2 \left( 2^{4-m}\int_{B_2}|\nabla^2 f_i|^2\, dx+ 2^{2-m}\int_{B_2}|\nabla f_i|^2\,dx \right)<\varepsilon_1/2.
\]Increasing $I$, if necessary, we conclude that for all $i \geq I$,
\[
2^{4-m} \int_{B_2} |\nabla^2 f_i|^2+ |\nabla f_i|^4 \,dx < \varepsilon_1 
\]and thus by \cite{Struwe,Wang} and Arz\'ela-Ascoli, $f_i$ subconverges to $c$ in $C^\infty(B_2,N)$. This implies the necessary contradiction.
\end{proof}

\subsection{Proof of regularity scale and $L^p$ estimates}

The main tool in the proof of the Minkowski bound on $\mathcal B_r(f)$ is a quantitative $\varepsilon$-regularity theorem. The classical assumption of small energy is replaced here by a homogeneity hypothesis. The theorem establishes that minimizing biharmonic maps that are almost homogeneous in enough directions must have a large regularity scale. For harmonic maps, minimizers with $(m-2)$ directions of almost homogeneity are sufficiently regular \cite{CNCPAM}. For biharmonic minimizers $(m-4)$ directions of almost homogeneity suffice.
\begin{thm}[$\varepsilon$-Regularity]\label{epsreg}
Let $f:B_4(0) \to N$ be a biharmonic map with $f \in H^2_\Lambda(B_4,N)$. 
\begin{itemize}
\item If $f$ is minimizing then there exists $\varepsilon=\varepsilon(m, N, \Lambda)>0$ such that if $f$ is $(\varepsilon, 2, m-4)$-homogeneous, then
\[
r_f(0) \geq 1.
\]
\item If $f$ is minimizing and $N$ satisfies Condition M or $f$ is stationary and $N$ satisfies Condition $S$, then there exists $\varepsilon=\varepsilon(m, N, \Lambda)>0$ such that if $f$ is $(\varepsilon, 2, m-k_0-1)$-homogeneous, then
\[
r_f(0) \geq 1.
\]
\end{itemize}
\end{thm}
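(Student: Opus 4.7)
I will argue by contradiction in both cases. If the first assertion fails, there exist minimizing biharmonic maps $f_i \in H^2_\Lambda(B_4,N)$ which are $(1/i,2,m-4)$-homogeneous at $0$ but with $r_{f_i}(0)<1$; in the second case the same setup holds with $(1/i,2,m-k_0-1)$-homogeneity. By the Scheven compactness results (Theorems~1.5 and~1.6 of \cite{Scheven}), a subsequence converges strongly in $W^{2,2}(B_2,N)$ to a limit $f$ that is still minimizing, resp.\ stationary. The comparison maps $h_i$ witnessing $(1/i,2,k)$-homogeneity (with $k=m-4$ or $k=m-k_0-1$) are $k$-homogeneous with invariant $k$-planes $V_i$; along a further subsequence $V_i\to V$ in the Grassmannian and $h_i\to h$ in $L^2(B_1)$, with $h$ being $k$-homogeneous about $V$. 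Strong $L^2$ convergence forces $T_{0,2}f=h$ a.e.\ on $B_1$, so $f$ is genuinely $k$-homogeneous on $B_2$ and therefore factors through a $0$-homogeneous map $\hat f$ on the orthogonal $(m-k)$-plane $V^\perp$.

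Next I want to show that $\hat f$, and hence $f$, is constant. In the first case $\hat f$ is a stationary biharmonic map on a $4$-dimensional domain, and so is smooth by \cite{ChangWangYang,Wang2}; combined with $0$-homogeneity and continuity at the origin this gives $\hat f\equiv\hat f(0)$. In the second case $\hat f$ lives on a $(k_0+1)$-dimensional domain. Under Condition~M, Lemma~\ref{homogeneouslimits} upgrades $f$ (and hence $\hat f$) to a minimizer, so Scheven's dimension reduction from \cite{Scheven} combined with Condition~M forces the singular set of $\hat f$ to lie in $\{0\}$; Condition~M at level $k=k_0$ then excludes $\hat f$ from being non-constant. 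Under Condition~S, with $f$ only stationary, I would iterate the tangent-map/cone-splitting dimension reduction: any putative singular point $y\neq 0$ of $\hat f$, together with the $0$-homogeneity of $\hat f$ at the ambient origin, yields via cone splitting a $0$-homogeneous stationary biharmonic map on a quotient $\mathbb R^{k_0}$ that is still singular at the origin; iterating, one reaches a non-constant, $0$-homogeneous, smooth-off-the-origin stationary biharmonic map on $\mathbb R^{k+1}$ for some $4\leq k\leq k_0$, contradicting Condition~S. Hence $\hat f$ is smooth off the origin, and Condition~S at level $k=k_0$ then forces $\hat f\equiv\hat f(0)$.

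With $f\equiv c$ established, the final step is to contradict $r_{f_i}(0)<1$ using Lemma~\ref{lemmatechnical}. Since $f_i\to c$ strongly in $L^2(B_2)$, for all sufficiently large $i$ that lemma applies and gives $f_i\in C^\infty(B_1)$; moreover, re-running the small-energy $\varepsilon$-regularity and Arzel\`a--Ascoli portion of its proof (using \cite{Wang,Struwe}) delivers $f_i\to c$ in $C^\infty(B_1,N)$. In particular $\sup_{B_1}\sum_{\ell=1}^4|\nabla^\ell f_i|\to 0$, giving $r_{f_i}(0)\geq 1$ for all large $i$ and providing the desired contradiction. The main obstacle I anticipate is the reduction step in the stationary Condition~S case: without minimality the limit does not inherit Scheven's dimension bounds directly, so one has to carefully carry out the tangent-map/cone-splitting descent and invoke Condition~S at each intermediate dimension $k\in[4,k_0]$; the other three cases rely on ingredients (dimension reduction in \cite{Scheven} and smoothness of biharmonic maps in dimension four from \cite{ChangWangYang,Wang2}) that are already in the literature.
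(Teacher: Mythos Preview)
Your overall architecture matches the paper's: argue by contradiction, pass to a limit via Scheven's compactness, show the limit is constant on $B_2$, then feed this back through Lemma~\ref{lemmatechnical} (together with the $C^\infty$ convergence established in its proof) to force $r_{f_i}(0)\geq 1$. For the second bullet your reduction via Lemma~\ref{homogeneouslimits} and Scheven's dimension bounds is exactly what the paper does; the paper simply cites Theorems~1.1 and~1.2 of \cite{Scheven} rather than sketching the cone-splitting descent you outline for Condition~S. One nitpick: Scheven's Theorem~1.5 does not by itself make the limit of minimizers a minimizer---that is precisely why Lemma~\ref{homogeneouslimits} exists---so your phrase ``still minimizing'' in the opening sentence should read ``stationary''; fortunately you only use stationarity in the first bullet.

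The substantive difference is in the first bullet, and there your argument has a soft spot. The paper does \emph{not} use four-dimensional biharmonic regularity. Instead it isolates Lemma~\ref{epsreglemma}, valid for an arbitrary $W^{2,2}$ map, which shows that $(0,2,m-4)$-homogeneity forces $\int_{B_2}|\nabla^2 f_\infty|^2=0$: if not, the $0$-homogeneity together with $m-4$ directions of translation invariance produces, for each scale $4^{-j}$, roughly $4^{j(m-4)}$ disjoint balls each carrying Hessian energy $c\,4^{-j(m-4)}$, contradicting $f_\infty\in W^{2,2}$. Your route---restrict to the four-dimensional slice $\hat f$, invoke \cite{ChangWangYang,Wang2} for smoothness, then use ``continuity at the origin'' plus $0$-homogeneity---is incomplete as written. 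To apply the $m=4$ regularity theory on a ball you need $\hat f\in W^{2,2}(B^4)$; but a nonconstant $0$-homogeneous map on $\mathbb{R}^4$ has $|\nabla^2\hat f|\sim |x|^{-2}$ and $\int_{B^4}|x|^{-4}\,dx=\infty$, so either you have already shown $\hat f$ is constant (making the regularity citation superfluous), or you only know $\hat f$ is smooth on $\mathbb{R}^4\setminus\{0\}$, in which case continuity at the origin is exactly the step that remains. The clean fix is the paper's Lemma~\ref{epsreglemma}, which works directly on the ambient $f_\infty$ and uses nothing about the biharmonic equation.
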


The proof relies on the following lemma which is true for any $W^{2,2}$ map. 

\begin{lem}\label{epsreglemma}
For all $\varepsilon>0$ there exists $\delta=\delta(m,N,\Lambda, \varepsilon)>0$ such that if $f:B_4(0) \to N$ is a $(\delta,2,m-4)$-homogeneous map with $\|f\|_{W^{2,2}(B_4(0))}\leq \Lambda$, then $f$ is $(\varepsilon,2,m)$-homogeneous.
\end{lem}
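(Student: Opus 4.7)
\emph{Proof proposal.} The plan is to argue by contradiction, reducing matters to the rigidity fact that any $(m-4)$-homogeneous map in $W^{2,2}$ must be constant. The essential mechanism is that such a map reduces to the radial extension $f(v,w)=g(w/|w|)$ of a map on $\mathbb S^3$, whose Hessian scales like $|w|^{-2}$, producing a logarithmically divergent factor $\int_0^1 r^{-1}\,dr$ when one computes $\int_{B_1(0)}|\nabla^2 f|^2$.

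Assuming the lemma fails, I would select $\varepsilon_0>0$ and a sequence $\{f_i\}\subset W^{2,2}(B_4(0),N)$ with $\|f_i\|_{W^{2,2}}\leq \Lambda$, each $(1/i,2,m-4)$-homogeneous via some $(m-4)$-homogeneous map $h_i$ relative to a plane $V_i$, yet with no $f_i$ being $(\varepsilon_0,2,m)$-homogeneous. Set $u_i(x):=T_{0,2}f_i(x)=f_i(2x)$. Since the $u_i$ are bounded in $W^{2,2}(B_2(0))$, after a subsequence $u_i\rightharpoonup f$ weakly in $W^{2,2}(B_2(0))$ and strongly in $L^2(B_1(0))$; the smallness hypothesis then forces $h_i\to f$ in $L^2(B_1(0))$ as well. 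Using compactness of the Grassmannian of $(m-4)$-planes, I would pass to a further subsequence with $V_i\to V$.

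Next I would transfer the $(m-4)$-homogeneity to $f$. Radial invariance $f(\lambda x)=f(x)$ passes to the limit by a change of variables in $L^2$. For $V$-translation invariance, given $z\in V$ of small norm, use the identity $h_i(x+z)=h_i(x+(z-P_{V_i}(z)))$, valid since $h_i$ is invariant along $V_i$; the offset $z-P_{V_i}(z)\to 0$ because $V_i\to V$ and $z\in V$. Routing the translation through the limit $f$ itself (which, being in $L^2$, has $L^2$-continuous translations), combined with $h_i\to f$ strongly in $L^2(B_1(0))$, yields $f(x+z)=f(x)$ a.e. Thus $f\in W^{2,2}(B_2(0))$ is $(m-4)$-homogeneous on $B_1(0)$.

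Choosing orthonormal coordinates $(v,w)\in\Real^{m-4}\times\Real^4$ adapted to $V$, write $f(v,w)=g(w/|w|)$. Differentiating the scaling identity weakly gives $|\nabla^2 f(v,w)|^2=|w|^{-4}F(w/|w|)$ for some nonnegative $F$ on $\mathbb S^3$. Fubini together with polar coordinates on $\Real^4$ yields
\[
\int_{B_1(0)}|\nabla^2 f|^2\,dx = c_m\left(\int_0^1 r^{-1}(1-r^2)^{(m-4)/2}\,dr\right)\int_{\mathbb S^3}F\,d\theta
\]
for a positive constant $c_m$, and the radial integral diverges logarithmically at $r=0$ for every $m\geq 4$. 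Finiteness of $\|f\|_{W^{2,2}(B_1(0))}$ forces $F\equiv 0$, hence $\nabla^2 f=0$ a.e.; combined with $0$-homogeneity this shows $f$ is a constant $c\in N$. Since the constant map is $m$-homogeneous and $\fint_{B_1(0)}|u_i-c|^2\to 0$, we conclude $f_i$ is $(\varepsilon_0,2,m)$-homogeneous for large $i$, the desired contradiction. The step I expect to be the main obstacle is the translation transfer: the $h_i$ have no uniform modulus of continuity and the invariance planes $V_i$ rotate with $i$, so the trick of routing translations through the fixed $L^2$-limit $f$ (which \emph{does} have continuous translations) is essential.
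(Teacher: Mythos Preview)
Your proposal is correct and follows essentially the same route as the paper: contradiction, compactness to an $(m-4)$-homogeneous $W^{2,2}$ limit, then a scaling argument showing such a map must have $\nabla^2 f\equiv 0$ and hence be constant. The only cosmetic difference is that you package the scaling step as an explicit polar-coordinate computation yielding a logarithmically divergent radial integral, whereas the paper discretizes the same mechanism into disjoint scaled balls whose Hessian energies sum to infinity; your treatment of the rotating planes $V_i\to V$ via $L^2$-continuity of translation for the fixed limit $f$ is in fact more explicit than the paper's one-line ``the convergence implies\dots''.
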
The proof is similar to the one found in \cite{CNCPAM}, though here we use the fact that $f \in W^{2,2}$ to allow for the weaker hypothesis of $(m-4)$-homogeneity.
\begin{proof}
We proceed by contradiction. Presume there exists an $\varepsilon>0$ for which the statement does not hold. Then there exists a sequence of $f_i:B_4(0) \to N$ with $\|f_i\|_{W^{2,2}} \leq \Lambda$ and each $f_i$ is $(i^{-1},2,m-4)$ homogeneous but not $(\varepsilon,2,m)$-homogeneous. There exists an $f_\infty\in W^{2,2}(B_4(0))$ such that a subsequence $f_{i} \to f_\infty$ strongly in $W^{1,2}(B_4)$. The convergence implies that $f_\infty$ is $(0,2,m-4)$-homogenous but not $(\varepsilon,2,m)$-homogeneous. We first establish that
\[
\int_{B_2(0)} |\nabla^2 f_\infty|^2 =0.
\]Suppose not. Then the a.e. radial invariance of $f_\infty$ implies that there exists some $c>0$ and $x \in \Real^m \cap \partial B_1(0)$ such that
\[
\int_{B_{\frac 12}(x)}|\nabla^2 f_\infty|^2 =c.
\]Since $f_\infty$ is $0$-homogeneous a.e., $\nabla^2f_\infty(x) = \lambda^{-2}\nabla^2 f_\infty(\lambda x)$ a.e. and thus for all $j \in \mathbb N$,
\begin{equation}\label{homenergy}
\int_{B_{4^{-j}/2}(x/4^j)}|\nabla^2 f_\infty|^2 = c(4^{-j})^{m-4}.
\end{equation}Notice that the balls $B_{ 4^{-j}/2}(x/4^j)$ are mutually disjoint. 

Since $f_\infty$ is $(0,2,m-4)$-homogeneous, there exists $c_1>0$ such that for each $j \in \mathbb N$ there exists a collection of at least $c_1(4^j)^{-m+4}$ mutually disjoint balls in $B_2(0)$ of radius $4^{-j}/2$ such that \eqref{homenergy} holds on each ball. Moreover, when $j_1 \neq j_2$, the collections are obviously disjoint. Therefore,
\[
\int_{B_2(0)} |\nabla^2 f_\infty|^2\geq \sum_{j=1}^\infty c(4^{-j})^{m-4}c_1(4^j)^{-m+4}=\infty.
\]But this contradicts the uniform $W^{2,2}$ bound on the $f_i$ and the lower semi-continuity of the energy. Therefore, $\int_{B_2(0)} |\nabla^2 f_\infty|^2=0$.

Now, by the Poincar\'e inequality,
\[
\int_{B_2(0)}|\nabla f_\infty - A|^2 \leq \int_{B_2(0)}|\nabla^2f_\infty|^2 =0
\]and thus $\nabla f_\infty$ is $(0,2,m)$-homogeneous and $f_\infty(x) = Ax+b$ a.e. Since $f_\infty$ is $(0,2,0)$-homogeneous at $0$, $Ax = \frac 12 Ax$ for a.e. $x \in B_2$. That is, $A x= 0$ a.e. and thus $f_\infty$ is $(0,2,m)$-homogeneous at $0$. The strong convergence of $f_i$ to $f_\infty$ in $W^{1,2}(B_2)$ implies the necessary contradiction.
\end{proof}

We are now ready to prove Theorem \ref{epsreg}.

\begin{proof}[Proof of Theorem \ref{epsreg}]
As a first step, we prove that there exists $\varepsilon_1>0$ such that if $f$ is a minimizing biharmonic map with $f \in H^2_\Lambda(B_4,N)$ and $f$ is $(\varepsilon_1,2,m)$-homogeneous, then $r_f(0) \geq 1$. Proceeding by contradiction, presume there exists a sequence of minimizing biharmonic maps $f_i$ with $\|f_i\|_{W^{2,2}(B_4)}\leq \Lambda$ and each $f_i$ is $(i^{-1},2,m)$-homogeneous, but $r_{f_i}(0)<1$. By Theorem 1.5 of \cite{Scheven}, we note that there exists $f_\infty$ such that $f_{i} \to f_\infty$ strongly in $W^{2,2}_{\mathrm{loc}}(B_4)$ and thus $f_\infty$ is $(0,2,m)$-homogeneous. Lemma \ref{lemmatechnical} gives the necessary contradiction.
We now appeal to Lemma \ref{epsreglemma} to finish the proof.

Now suppose $f$ is minimizing and $N$ satisfies Condition M. We first prove that given $\varepsilon_1>0$, there exists $\varepsilon$ sufficiently small such that if $f$ is $(\varepsilon,2,m-k_0-1)$-homogeneous then $f$ is $(\varepsilon_1,2,m)$-homogeneous. We proceed as usual by contradiction. Presume there exists a sequence of minimizing biharmonic maps $f_i$ with $\|f_i\|_{W^{2,2}(B_4)}\leq \Lambda$, each $f_i$ is $(i^{-1},2,m-k_0-1)$ homogeneous, but not $(\varepsilon_1,2,m)$-homogeneous. There exists $f \in W^{2,2}(B_4)$ such that a subsequence of the $f_i \to f$ strongly in $W^{2,2}(B_3)$ and thus $f$ is $(0,2,m-k_0-1)$ homogeneous but not $(\varepsilon_1,2,m)$ homogeneous. Lemma \ref{homogeneouslimits} implies that  $f$ is minimizing in $B_3$. Without loss of generality, presume that  $\frac{\partial  f}{\partial x_i} \equiv 0$ for all $i = \{k_0+2, \dots, m\}$. Let $\hat f:B_3\subset \Real^{k_0+1}\to N$  such that $\hat f = f|_{B_3^{k_0+1}\times \{0\}}$. Since $\hat f$ is minimizing in $B_3^{k_0+1}$, Theorem 1.1 of \cite{Scheven} implies that $\hat f:B_3^{k_0+1} \to N$ satisfies $\dim \mathcal S(\hat f) \leq k_0+1-(k_0+2)<0$. Thus, $\hat f\in C^{\infty}(B_3^{k_0+1}\backslash \{0\})$ and $\hat f$ is $0$-homogeneous. By Condition $M$, $\hat f$ must be a constant map and thus $f$ is $(0,2,m)$-homogeneous. The strong convergence of $f_i \to f$ implies a contradiction. From here, the proof follows as in the previous case.

The stationary argument is nearly identical, though now we replace the dimension reduction result for minimizers with Theorem 1.2 of \cite{Scheven} and use the compactness theory for stationary biharmonic maps, Theorem 1.6 of \cite{Scheven}. 
\end{proof}
We are now ready to prove Theorem \ref{MinMinkowski}.
\begin{proof}[Proof of Theorem \ref{MinMinkowski}]
If $x \in \mathcal B_r(f)$, then by Theorem \ref{epsreg}, $f$ is not $(\varepsilon,2r,m-4)$-homogeneous at $x$. In other words, $x \in\mathcal S^{m-5}_{\eta,2r}$ for any $0<\eta \leq \varepsilon(m,N,\Lambda)$ of Theorem \ref{epsreg}. Therefore, for every $\eta>0$ sufficiently small,
\[
T_r(\mathcal B_r(f)) \subset T_r(\mathcal S_{\eta,2r}^{m-5}).
\]Using Theorem \ref{minkowskibound} we note that
\[
\mathrm{Vol}(T_r(\mathcal B_r(f))\cap B_1(x)) \leq \mathrm{Vol}(T_r(\mathcal S_{\eta,2r}^{m-5}\cap B_1(x)) \leq C(m,N,\Lambda,\eta)r^{5-\eta}.
\]

The second item in the theorem follows the same argument, though this time we use $ \varepsilon(m,N,\Lambda)$ of the second item in Theorem \ref{epsreg}. Again, if $x \in \mathcal B_r(f)$ then $x \in \mathcal S_{\eta,2r}^{m-(k_0+2)}$ for all $0<\eta\leq \varepsilon$ so Theorem \ref{minkowskibound} again implies the result.
\end{proof}

\subsection{Uniform bounds on the number of singular points}
We prove Theorem \ref{counting} by first demonstrating that, for $m=5$ or $m=k_0+2$ for special targets, if $f$ is almost-homogeneous on a fixed scale, then $f$ is smooth on an annulus of fixed scale. Using this fact, we  demonstrate that the singular points are locally isolated. We then apply an inductive covering argument as in \cite{NVV}. The covering argument relies on the fact that the number of scales on which $f$ is not almost-homogeneous about a point is bounded, independent of $f$.

\begin{lem}\label{smooth}
Let $f:B_4(0)\subset \Real^m \to N^n$ denote a biharmonic map with $f\in H^2_\Lambda(B_{4}(0),N)$. Let $x \in B_1(0)$ and $0<r<1/2$.
\begin{itemize}
\item If $f$ is minimizing and $m=5$ then there exists $\varepsilon=\varepsilon(m,N,\Lambda)>0$ such that if $f$ is $(\varepsilon,r,0)$-homogeneous at $x$ then $f \in C^\infty(B_{\frac r2}(x) \backslash B_{\frac r4}(x))$.

\item If $f$ is minimizing and $N$ satisfies Condition $M$ or $f$ is stationary and $N$ satisfies Condition S, then for $m=k_0+2$ there exists $\varepsilon=\varepsilon(m,N,\Lambda)>0$ such that if $f$ is $(\varepsilon,r,0)$-homogeneous at $x$ then $f \in C^\infty(B_{\frac r2}(x) \backslash B_{\frac r4}(x))$.
\end{itemize}
\end{lem}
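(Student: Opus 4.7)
The plan is to run a compactness and contradiction argument in the style of the proof of Theorem \ref{epsreg}. Suppose the conclusion fails in one of the three sub-cases. Then there exist a sequence of biharmonic maps $f_i:B_4(0)\to N$ with $f_i\in H^2_\Lambda(B_4(0),N)$, points $x_i\in B_1(0)$, and scales $r_i\in(0,1/2)$ such that each $f_i$ is $(1/i,r_i,0)$-homogeneous at $x_i$ while $f_i\notin C^\infty(B_{r_i/2}(x_i)\setminus B_{r_i/4}(x_i))$. Passing to the rescalings $g_i:=T_{x_i,r_i}f_i$, the $g_i$ are biharmonic on $B_{1/r_i}(0)\supset B_2(0)$, they inherit a uniform energy bound on $B_2(0)$ from the monotonicity formula \eqref{ThetaDef}--\eqref{AnnularMonotone} applied at $(x_i,r_i)$, and there exist $0$-homogeneous maps $h_i$ with $\fint_{B_1(0)}|g_i-h_i|^2\le 1/i$.

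The next step is to extract a $W^{2,2}_{\mathrm{loc}}(B_2(0))$ limit $g$ of a subsequence of the $g_i$ via Theorem 1.5 of \cite{Scheven} in the minimizing cases and Theorem 1.6 in the stationary case; as in the proof of Lemma \ref{lemmatechnical}, the convergence is strong in $W^{2,2}_{\mathrm{loc}}$. In particular $g$ is $0$-homogeneous and biharmonic. In the two minimizing sub-cases Lemma \ref{homogeneouslimits} upgrades $g$ to a minimizing biharmonic map on $B_2(0)$, while in the stationary sub-case Theorem 1.6 of \cite{Scheven} yields that $g$ is stationary biharmonic. Under the dimensional hypothesis $m=5$ (respectively $m=k_0+2$), applying Scheven's dimension reduction (Theorem 1.1 in the plain minimizing case, together with Condition M in the special-target minimizing sub-case, and Theorem 1.2 together with Condition S in the stationary sub-case) yields $\dim\mathcal S(g)\le 0$. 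Because $g$ is $0$-homogeneous, $\mathcal S(g)$ is a cone, so $\mathcal S(g)\subset\{0\}$ and $g\in C^\infty(B_1(0)\setminus\{0\})$.

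The final step is to transfer smoothness on the annulus $\overline{B_{1/2}(0)\setminus B_{1/4}(0)}$ from $g$ to $g_i$. Cover the annulus by finitely many balls $B_\rho(y_k)$ with $\rho$ so small that each $B_{2\rho}(y_k)$ lies in $B_1(0)\setminus\{0\}$ and
\[
\rho^{4-m}\int_{B_{2\rho}(y_k)}|\nabla^2 g|^2\,dy + \rho^{2-m}\int_{B_{2\rho}(y_k)}|\nabla g|^4\,dy < \varepsilon_1/2,
\]
where $\varepsilon_1$ is the small-energy threshold for biharmonic $\varepsilon$-regularity from \cite{Wang,Struwe}. Strong $W^{2,2}_{\mathrm{loc}}$ convergence of $g_i$ to $g$, together with the $L^\infty$-$W^{2,2}$ Nirenberg interpolation used in the proof of Lemma \ref{lemmatechnical}, propagates this smallness to each $g_i$ on $B_{2\rho}(y_k)$ for $i$ sufficiently large. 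The standard $\varepsilon$-regularity theorem then gives $g_i\in C^\infty(B_\rho(y_k))$, hence $g_i\in C^\infty\bigl(\overline{B_{1/2}(0)\setminus B_{1/4}(0)}\bigr)$. Unscaling produces $f_i\in C^\infty(B_{r_i/2}(x_i)\setminus B_{r_i/4}(x_i))$, contradicting the choice of the sequence.

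The main obstacle is the possibility of a defect measure in the stationary sub-case: Theorem 1.6 of \cite{Scheven} only provides strong $W^{2,2}_{\mathrm{loc}}$ convergence once one rules out energy concentration on the annulus. Condition S, combined with $0$-homogeneity of the limit and the fact that $\mathcal S(g)\subset\{0\}$, is what ultimately forces the defect measure to be supported in $\{0\}$ and hence disjoint from the annulus; verifying this is where the special-target hypothesis plays its essential role in the stationary case. A secondary but routine concern is the scaling bookkeeping needed to pass between the lemma as stated on $B_2(0)$ and its application on the small balls $B_{2\rho}(y_k)$.
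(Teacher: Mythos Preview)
Your argument is correct and shares the paper's compactness--contradiction skeleton, but the two proofs are organized differently in a way worth noting. After normalizing to $x=0$, $r=1$, the paper proceeds in two stages: first it shows (by contradiction) that for any $\eta>0$ there is $\varepsilon>0$ so that an $(\varepsilon,1,0)$-homogeneous $f$ is $L^2$-close on $B_1$ to some $0$-homogeneous minimizer $\hat f\in C^\infty(\Real^m\setminus\{0\})\cap H^2_\Lambda$; then, in a separate Claim, it proves a \emph{uniform} gradient bound $\sup_{B_{1/2}\setminus B_{1/4}}|\nabla\hat f|\le C(N,\Lambda)$ valid over the entire class of such comparison maps. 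This yields a fixed radius $r_0$ on which every $\hat f$ is close to a constant, hence so is $f$, and Lemma~\ref{lemmatechnical} (the $L^2$-close-to-constant $\varepsilon$-regularity) closes the argument. You instead run a single contradiction sequence $g_i\to g$, show $g\in C^\infty(B_1\setminus\{0\})$ via Lemma~\ref{homogeneouslimits} and Scheven's dimension bounds, and then pass small scaled energy on finitely many balls covering the annulus from $g$ to $g_i$ by strong $W^{2,2}$ convergence, invoking the small-energy $\varepsilon$-regularity of \cite{Wang,Struwe} directly. Because you work with one fixed limit rather than the whole family of possible comparison maps, the paper's intermediate uniform-Lipschitz claim is unnecessary in your approach; this makes your route more direct. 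The paper's two-step structure has the minor advantage that its first step (quantitative proximity to a smooth $0$-homogeneous model) is a statement of independent interest. Your remark on the defect measure in the stationary sub-case correctly isolates the one genuinely delicate point; the paper handles it only implicitly via the citation of Theorem~1.6 of \cite{Scheven}.
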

\begin{proof}Without loss of generality, let $x=0$ and $r=1$. 
We first demonstrate that given $\eta>0$ there exists $\varepsilon=\varepsilon(\eta,m,N,\Lambda)>0$ such that if $f$ is minimizing biharmonic and $(\varepsilon,1,0)$-homogeneous then there exists a $0$-homogeneous minimizer $\hat f \in C^{\infty}(\Real^5\backslash \{0\})\cap H_\Lambda^2(B_3(0),N)$ such that 
\begin{equation}\label{countingboundeq}
\fint_{B_1(0)}|f-\hat f|^2 <\eta.
\end{equation}
If not then there exists a sequence $f_i$ minimizing biharmonic and $(1/i,1,0)$-homogeneous. Since there exists an $f$ such that $f_i \to f$ strongly in $W^{2,2}_{loc}(B_{4}(0),N)$ we observe that $f$ is $0$-homogeneous a.e. and by Theorem 1.5 of \cite{Scheven}, $f$ is stationary. Thus, there exists $\tilde f\in H^2_\Lambda(B_3(0),N)$, $0$-homogeneous with $\fint_{B_1(0)} |f -\tilde f|^2 =0$. Since $\tilde f$ is thus stationary biharmonic, results of \cite{Struwe,Wang} imply that $\tilde f \in C^{\infty}(\Real^5 \backslash \{0\})$. By Lemma \ref{homogeneouslimits}, (since $f_i \to \tilde f$ in $W^{2,2}_{loc}$) we conclude that $\tilde f$ is a minimizer. Thus we have a contradiction. 

Now, let $\hat f$ satisfy \eqref{countingboundeq} and the conditions mentioned previous to the equation. 
\begin{claim}
There exists $C=C(N,\Lambda)$ such that
\[
\sup_{x \in B_{1/2}(0)\backslash B_{1/4}(0)}|\nabla \hat f(x)|\leq C.
\]
\end{claim}
\begin{proof}
Suppose no such $C$ exists. Then there exist $f_i \in H^2_\Lambda(B_3(0),N)$, $0$-homogeneous minimizers that are smooth away from the origin and $x_i \in B_{1/2}(0)\backslash B_{1/4}(0)$ with $|\nabla f_i(x_i)|> i$. Then $f_i \to f \in W^{2,2}_{loc}(B_3(0),N)$ and $f$ is a $0$-homogeneous minimizer. Choose $r_i \to 0$ and take a further subsequence such that for each $i$, $x_i \in B_{r_i}(x)$. Let $R_i \to 0$ such that $R_i/r_i \to \infty$. For every $\varepsilon>0$ there exists $I$ such that for all $i \geq I$,
\begin{align*}
r_i^{2-m} \int_{B_{r_i}(x_i)}|\nabla^2 f_i|^2 + |\nabla f_i|^4& = R_i^{2-m} \int_{B_{R_i}(x_i)}|\nabla^2 f_i|^2 + |\nabla f_i|^4\\
&\leq \varepsilon+ R_i^{2-m} \int_{B_{R_i}(x_i)}|\nabla^2 f|^2 + |\nabla f|^4\\
& \leq \varepsilon + R_i^{2-m} \int_{B_{R_i+r_i}(x)}|\nabla^2 f|^2 + |\nabla f|^4. 
\end{align*}Since $f$ is smooth at $x$, for large enough $i$, 
\[R_i^{2-m} \int_{B_{R_i+r_i}(x)}|\nabla^2 f|^2 + |\nabla f|^4 <\varepsilon_1/2.
\]Here $\varepsilon_1$ is as in the proof of Lemma \ref{lemmatechnical}. But notice that for $\varepsilon<\varepsilon_1/2$, we contradict the choice of $x_i$.
\end{proof}

 Thus for any $y \in B_{\frac 12}(0)\backslash B_{\frac 14}(0)$, there exists an $r_0$ independent of $\hat f,y$ such that 
\[
\fint_{B_{r_0}(y)}|\hat f(y)-\hat f(z)|^2 dz<\varepsilon_0/2
\]where $\varepsilon_0$ is chosen from Lemma \ref{lemmatechnical}. Thus, if we choose $\eta$ such that $\frac \eta{r_0^m} < \varepsilon_0/2$ then
\begin{align*}
\fint_{B_{r_0}(y)}|f- \hat f(y)|^2 &\leq \fint_{B_{r_0}(y)}|f-\hat f|^2+\fint_{B_{r_0}(y)}|\hat f(y)-\hat f|^2 \\
&<\frac\eta{r_0^m} + \varepsilon_0/2 < \varepsilon_0.
\end{align*}Since this argument holds for any $y\in B_{\frac 12}(0)\backslash B_{\frac 14}(0)$, Lemma \ref{lemmatechnical} implies that $f \in C^{\infty}(B_{\frac 12}(0)\backslash B_{\frac 14}(0))$. 

In the case of special targets, the arguments are nearly identical. For minimizers, we establish that $\tilde f\in C^\infty(\Real^{k_0+2}\backslash \{0\})$ by Theorem 1.1 of \cite{Scheven}. In the stationary setting, Theorem 1.6 of \cite{Scheven} gives compactness of stationary biharmonic maps. Moreover, Theorem 1.2 of \cite{Scheven} implies that $\tilde f\in C^\infty(\Real^{k_0+2}\backslash \{0\})$.
\end{proof}

\begin{cor}
In either of the settings outlined above, for each $x \in \mathcal S(f)\cap B_1(0)$ there exists $r_x >0$ such that $f \in C^{\infty}(B_{r_x}(x)\backslash\{x\})$.
\end{cor}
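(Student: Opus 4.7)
The plan is to exploit the fact that, at any fixed $x \in \mathcal{S}(f)\cap B_1(0)$, the monotone quantity $\Theta_f(x,\cdot)$ forces $f$ to be almost radially constant on \emph{all} sufficiently small scales about $x$. Then Lemma \ref{smooth} supplies $C^\infty$ regularity on a family of annuli whose union covers a punctured neighborhood of $x$.

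First I will choose $\varepsilon = \varepsilon(m,N,\Lambda) > 0$ from Lemma \ref{smooth} in whichever of the three cases applies, fix some $\gamma \in (0,1/2)$, and invoke Lemma \ref{QRlemma} with these parameters to produce $\delta > 0$ and $q \in \mathbb{N}$ such that (writing $s = 2r$ in the notation of the rigidity lemma) the bound $\Theta_f(x,s) - \Theta_f(x,\gamma^q s/2) \leq \delta$ implies $f$ is $(\varepsilon,s,0)$-homogeneous at $x$. I will then apply the monotonicity formula \eqref{AnnularMonotone}: the map $r \mapsto \Theta_f(x,r)$ is non-decreasing where defined and bounded above by $\Lambda$, so it has a finite limit as $r \to 0^+$. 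This yields some $r_x \in (0,1)$ such that for every admissible $s \in (0,r_x)$ the rigidity hypothesis above is satisfied, so $f$ is $(\varepsilon,s,0)$-homogeneous at $x$, and Lemma \ref{smooth} gives $f \in C^\infty(B_{s/2}(x) \setminus B_{s/4}(x))$ for each such $s$.

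To close the argument, given any $y \in B_{r_x/2}(x) \setminus \{x\}$ with $t := |y-x|$, I will pick an admissible $s \in (2t, \min(4t,r_x))$ --- a nonempty open interval, since $t < r_x/2$ and the non-admissible scales form a Lebesgue null set --- so that $y$ lies in the annulus $B_{s/2}(x) \setminus B_{s/4}(x)$ on which $f$ is smooth. Ranging over $y$ yields $f \in C^\infty(B_{r_x/2}(x) \setminus \{x\})$, which is the desired conclusion (after relabelling).

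The only genuine obstacle is the technical point that $\Theta_f(x,\cdot)$ is defined only almost everywhere, so the covering scales must be chosen within the full-measure set of admissible scales; density of admissible scales in $(0,r_x)$ makes this harmless. Beyond that, the argument is a direct synthesis of monotonicity with Lemmas \ref{QRlemma} and \ref{smooth} and requires no further new input.
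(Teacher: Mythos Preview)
Your proposal is correct and follows essentially the same route as the paper's proof: fix $\varepsilon$ from Lemma~\ref{smooth}, pick $\delta,q$ from Lemma~\ref{QRlemma}, use monotonicity of $\Theta_f(x,\cdot)$ to find a threshold scale below which the rigidity hypothesis holds, and then cover the punctured ball by the resulting smooth annuli. Your treatment is slightly more explicit than the paper's in two places --- you spell out how to pick an admissible scale $s$ for each point $y$, and you flag the a.e.\ definedness of $\Theta_f$ and resolve it by density --- but these are refinements of presentation rather than a different argument.
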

\begin{proof}
Choose $\varepsilon>0$ as in Lemma \ref{smooth} and let $\gamma=1/3$. Choose $\delta,q$ from Lemma \ref{QRlemma}. For $x \in \mathcal S(f) \cap B_1(0)$, the monotonicity of $\Theta_f$ implies that there exists some $r_x>0$ such that $\Theta_f(x,2r_x) - \Theta_f(x,r_x/3^q)\leq \delta$. Then $f$ is $(\varepsilon, 2r_x,0)$-homogeneous at $x$ and applying Lemma \ref{smooth}, $f \in C^\infty(B_{r_x}(x)\backslash B_{r_x/2}(x))$. Since $f$ is $(\varepsilon,r,0)$-homogeneous for all $r\leq r_x$, repeated application of Lemma \ref{smooth} implies the result.
\end{proof}

We now use a covering argument to prove Theorem \ref{counting}.
\begin{proof}[Proof of Theorem \ref{counting}] Notice first that for any fixed $f$, the number of singular points in $B_1(0)$ is finite. This follows from \eqref{Nbound} and the previous corollary. That is, for any $x$ in $B_1(0)$, the number of scales $2^{-k}$ on which $f$ is not $(\varepsilon,2^{-k},0)$-homogeneous is bounded by $Q$. 

Let $S_0:= |\mathcal S(f)\cap B_1(0)|$. We will define a sequence $\{T_i\}$ such that each $T_i \in \{0,1\}$. 

For $i=1$, consider the cover $$\{B_{1/2}(x)\}_{x \in \mathcal S(f)\cap B_1(0)}$$ and choose $x_j \in  \mathcal S(f)\cap B_1(0)$ such that $C_1:=\{B_{1/2}(x_j)\}_{j \in I_1}$ is a maximal cover of $\mathcal S(f) \cap B_1(0)$ and the balls $B_{1/4}(x_k)\cap B_{1/4}(x_j) = \emptyset$ for $k\neq j$. Note that $|I_1| \leq c(m)$.

Let $B_{1/2}(x_1)$ denote a ball in $C_1$ containing the most points of $\mathcal S(f)$ and let $S_1:=|\mathcal S(f) \cap B_{1/2}(x_1)|$. If $S_1=S_0$, let $T_1=0$. If $S_1<S_0$, let $T_1=1$. Notice that the nature of the covering implies that
\[
c(m)^{-1}S_0 \leq S_1 <S_0.
\]Also, since $S_1 \neq S_0$, there exists a $y_1 \in B_1(0)\cap \mathcal S(f) \backslash B_{1/2}(x_1)$. Note that for any $z \in B_{1/2}(x_1)$, $B_2(z) \backslash B_{1/4}(z)$ contains at least one point $x_1, y_1$.

We proceed inductively by covering $B_{2^{-i}}(x_i)\cap \mathcal S(f)$ by balls of radius $2^{-i-1}$. For the cover $C_{i+1}$ let $B_{2^{-i-1}}(x_{i+1})$ denote the ball containing the maximum number of points of $\mathcal S(f)$ in $B_{2^{-i}}(x_{i})$. Since $S_0$ is finite, there exists $i^*$ for which $S_{i^*}=1$. Let $|T|:=\sum_{i=1}^{i^*} T_i$. Then clearly
\[
S_0 \leq c(m)^{|T|}.
\]If $T_i=1$, then for all $z \in B_{2^{-i}}(x_i)$, there exists $z_i \in \mathcal S(f)$ such that $2^{-i-1}\leq |z-z_i| \leq 2^{2-i}$. Namely, $z_i=x_i$ or $z_i=y_i$. Since, $x_{i^*} \in B_{2^{-i}}(x_i)$ for all $i \leq i^*$, for each $T_i=1$ there exists $z_i \in \mathcal S(f)$ such that
\[
2^{-i-1}\leq |x_{i^*}-z_i|\leq 2^{2-i}.
\]Since \eqref{Nbound} and Lemma \ref{smooth} give a bound on the number of annuli of a fixed ratio about $x_{i^*}$ for which $f$ is not smooth, $|T| \leq C(m,\Lambda,N)$.

\end{proof}

\bibliographystyle{amsplain}
\bibliography{Biblio}
\end{document}